\newenvironment{proof}[1][{}]{
  \begin{trivlist}\item[]\textit{Proof #1}\quad}%
  {\hfill\hspace*{\fill}~$\square$\end{trivlist}}
\newtheorem{thm}{Theorem}
\newtheorem{prop}[thm]{Proposition}
\newtheorem{lem}[thm]{Lemma}
\newtheorem{cor}[thm]{Corollary}
\newtheorem*{rmk}[thm]{Remark} 
\theoremstyle{empty}
\definecolor{turquoise}{cmyk}{0.65,0,0.1,0.1}
\definecolor{darkgreen}{cmyk}{0.87,0.5,0.81,0.66}
\newcommand{\rawdef}[1]{\emph{#1}} 
\newcommand{\defn}[1]{\rawdef{#1}\index{#1}}
\newcommand{\Corref}[1]{Corollary~\ref{#1}}
\newcommand{\Eqnref}[1]{Equation~\eqref{#1}}
\newcommand{\Lemref}[1]{Lemma~\ref{#1}}
\newcommand{\Secref}[1]{Section~\ref{#1}}
\newcommand{\Thmref}[1]{Theorem~\ref{#1}}
\newcommand{\Propref}[1]{Proposition~\ref{#1}}
\newcommand{\cinfty}{C^\infty}
\DeclareMathOperator{\convh}{conv}
\newcommand{\convhull}[1]{\convh(#1)}
\newcommand{\R}{\mathbb{R}}
\newcommand{\E}{\mathbb{E}}
\DeclareMathOperator{\Grad}{grad}
\newcommand{\grad}{\Grad}
\newcommand{\norm}[1]{\left|#1\right|}
\newcommand{\abs}[1]{\left|#1\right|}
\newcommand{\bracketprod}[2]{\left\langle#1,#2\right\rangle}
\newcommand{\bdry}[1]{\partial{#1}}
\newcommand{\gdist}{d} 
\newcommand{\gdistG}[1]{\gdist_{#1}} 
\newcommand{\distG}[3]{\gdist_{#1}(#2,#3)}
\newcommand{\close}[1]{\overline{#1}} 
\DeclareMathOperator{\interior}{int}
\newcommand{\intr}[1]{\interior(#1)}
\newcommand{\spaceball}[3]{B_{#1}(#2,#3)} 
\newcommand{\cspaceball}[3]{\close{B}_{#1}(#2,#3)} 
\DeclareMathOperator{\aff}{aff} 
\newcommand{\affhull}[1]{\aff(#1)}
\newcommand{\splxs}{\sigma}
\newcommand{\splxt}{\tau}
\newcommand{\tsplxs}{\tilde{\splxs}}
\newcommand{\tsplxt}{\tilde{\splxt}}
\newcommand{\tp}{\tilde{p}}
\newcommand{\tq}{\tilde{q}}
\newcommand{\gthickness}{t} 
\newcommand{\gsplxalt}{a}
\newcommand{\glongedge}{L}
\newcommand{\man}{M}
\newcommand{\gdistEn}{\gdistG{\E^n}} 
\newcommand{\distEn}[2]{\distG{\E^n}{#1}{#2}}
\newcommand{\ballEn}[2]{\spaceball{\E^n}{#1}{#2}} 
\newcommand{\genfct}[1]{\mathcal{E}_{#1}}
\newcommand{\genfctbc}{\genfct{\ggbarycoord}}
\newcommand{\enfctbc}[1]{\genfctbc(#1)}
\newcommand{\genfctmu}{\genfct{\mu}}
\newcommand{\enfctmu}[1]{\genfctmu(#1)}
\newcommand{\gdistM}{\gdistG{\man}} 
\newcommand{\distM}[2]{\distG{\man}{#1}{#2}}
\newcommand{\ballM}[2]{\spaceball{\man}{#1}{#2}}
\newcommand{\cballM}[2]{\cspaceball{\man}{#1}{#2}}
\newcommand{\injradM}{\iota_{\man}}
\newcommand{\curvupbnd}{{\Lambda_{u}}}
\newcommand{\curvlowbnd}{{\Lambda_{\ell}}}
\newcommand{\curvabsbnd}{\Lambda}
\newcommand{\stdsplx}[1]{\boldsymbol{\Delta}^{\!#1}}
\newcommand{\stdsplxn}{\stdsplx{n}}
\newcommand{\riemsplxs}{\gsplxs^{}_{\!\!M}}
\newcommand{\riemsplxt}{\gsplxt^{}_{\!M}}
\newcommand{\riemsplxeta}{\gsplxeta^{}_{M}}
\newcommand{\ggbarycoord}{\lambda} 
\newcommand{\gbarycoord}[1]{\ggbarycoord_{#1}} 
\newcommand{\splxsE}{\gsplxs^{}_{\E^n}}
\newcommand{\tsplxsE}{\gtsplxs^{}_{\E^n}}
\newcommand{\tsplxtE}{\gtsplxt^{}_{\E^n}}
\DeclareMathOperator{\arccosh}{arccosh}
\newcommand{\cnstcurv}{\kappa}
\newcommand{\seccurv}{K}
\newcommand{\gsplxs}{\boldsymbol{\sigma}}
\newcommand{\gsplxt}{\boldsymbol{\tau}}
\newcommand{\gsplxeta}{\boldsymbol{\eta}}
\newcommand{\gtsplxs}{\boldsymbol{\tilde{\sigma}}}
\newcommand{\gtsplxt}{\boldsymbol{\tilde{\tau}}}
\newcommand{\rl}{\sqrt{\curvabsbnd}}
\newcommand{\Hk}{\mathbb{H}_{\cnstcurv}}
\newcommand{\Hl}{\mathbb{H}_{\curvlowbnd}}
\newcommand{\Hu}{\mathbb{H}_{\curvupbnd}}
\newcommand{\mup}{\mu_+(\man)}
\newcommand{\mun}{\mu_-(\man)}
\DeclareMathOperator{\supp}{supp} 
\newcommand{\tl}{\vartheta_\ell}
\newcommand{\tu}{\vartheta_u}
\newcommand{\tr}{\tilde{r}}
\newcommand{\tR}{\tilde{R}}
\newcommand{\tb}{\tilde{b}}
\newcommand{\tv}{\tilde{v}}
\title{Barycentric coordinate neighbourhoods in Riemannian manifolds
}
\author{Ramsay Dyer\thanks{{\tt yasmar@gmail.com}}
        \and
        Gert Vegter\thanks{Johann Bernoulli Institute,
        Rijksuniversiteit Groningen} \thanks{{\tt g.vegter@rug.nl}}
        \and
        Mathijs Wintraecken
        \thanks{INRIA Sophia Antipolis}
        \thanks{{\tt m.h.m.j.wintraecken@gmail.com}}}
\begin{document}

\pagenumbering{roman}
\maketitle

%

\begin{abstract}
  We quantify conditions that ensure that a signed measure on a
  Riemannian manifold has a well defined centre of mass.  We then use
  this result to quantify the extent of a neighbourhood on which the
  Riemannian barycentric coordinates of a set of $n+1$ points on an
  $n$-manifold provide a true coordinate chart, i.e., the barycentric
  coordinates provide a diffeomorphism between a neighbourhood of a
  Euclidean simplex, and a neighbourhood containing the points on the
  manifold. 
\end{abstract}

\paragraph{Keywords.} Riemannian centre of mass, Karcher means,
barycentric coordinates, Riemannian manifold, Riemannian simplices

\thispagestyle{empty}

\tableofcontents

\clearpage
\pagenumbering{arabic}
%
%

The focus of this work is on identifying conditions under which
barycentric coordinates will parameterise a neighbourhood of a simplex
in a Riemannian manifold, and specifically on quantifying the extent
of the permissible neighbourhood. 
Recall that if $\splxs=\{p_0,\ldots,p_n\} \subset \E^n$ defines a
\emph{nondegenerate} simplex in Euclidean space\footnote{We consider
  $\E^n$ to have an affine structure, and Euclidean distance function,
  but no canonical origin or coordinate system.}, i.e.,
$\aff(\splxs)= \E^n$, where $\aff(\splxs)$ is the affine hull of
$\splxs$, then $\E^n$ can be parameterised by  the barycentric
coordinate functions; any point $u \in \E^n$ is uniquely defined by a
set of $(n+1)$ barycentric coordinates, $(\lambda_i)$ that satisfy
$\sum \lambda_i = 1$. The point $u$ corresponding to $(\lambda_i)$ can
be identified as the point that minimises the function
\begin{equation}
  \label{eq:enfct.eucl}
  x \mapsto \tfrac12\sum_{i=0}^n \lambda_i \distEn{x}{p_i}^2,
\end{equation}
where $\distEn{x}{y}$ is the Euclidean distance function. In this way
the barycentric coordinates define the familiar affine functions that
satisfy $\lambda_i(p_j)=\delta_{ij}$.

We can view a given set of barycentric coordinates
$\ggbarycoord = (\gbarycoord{0},\ldots,\gbarycoord{n})$ as a point in
$\R^{n+1}$. The set $\stdsplxn$ of all points in $\R^{n+1}$ with
non-negative coefficients that sum to $1$ is called the \defn{standard
  Euclidean $n$-simplex}. Thus the minimisation of the
function~\eqref{eq:enfct.eucl} defines a \defn{barycentric coordinate
  map} from the the plane
$\{\lambda \in \R^{n+1} \mid \sum \lambda_i = 1 \}$ to $\E^n$ that
brings the standard Euclidean simplex to the Euclidean simplex
$\splxsE = \convhull\splxs \subset \E^n$.

This same technique can be used to define barycentric coordinate
neighbourhoods in a Riemannian manifold $\man$. If
$\splxs = \{p_i\} \subset \man$, we consider the function
\eqref{eq:enfct.eucl} except that the Euclidean distance $\gdistEn$ is
replaced by $\gdistM$, the intrinsic metric of the manifold. The first
problem is to identify conditions under which this function admits a
unique minimum. This is a particular case of a more general problem of
identifying conditions under which the \defn{Riemannian centre of
  mass} of a measure is well defined. There are several demonstrations
of the existence and uniqueness of Riemannian centres of mass. 

The standard reference for the subject is the excellent exposition by
Karcher \cite{Karcher} (see also the recent note outlining the earlier
history of the subject \cite{karcher2014hist}). He uses Jacobi field
estimates to demonstrate that if the support of a measure $\mu$ is
contained in a sufficiently small convex ball, then the \defn{energy
  function} $x \mapsto \tfrac12\int \distM{x}{y}^2\,d\mu(y)$ is
convex, and since the gradient is pointing outward on the boundary of
the ball, there is a unique minimum in the ball's interior.

Kendall \cite{Kendall} gave a different proof based on the concept of
``convex geometry'' which arose in earlier work on Dirichlet
problems. Kendal was able to relax Karcher's bounds on the size of the
ball that must contain the support of the measure. 

Groisser \cite{groisser2004} gives a proof based on a contraction
mapping argument, using a map defined from the gradient of the above
mentioned energy function. This yields a constructive algorithm for
finding the centre of mass, but the constraint on the extent of the
admissible neighbourhood is more restrictive than Karcher's. 

These works all assume an unsigned measure, so they do not apply to
situations where negative weights are involved.  The existence and
uniqueness of the Riemannian centre of mass with non-negative weights
at least allows us to define a ``filled in'' geometric \defn{Riemannian
  simplex} associated to a finite set of points with small
diameter. If $\splxs \subset \man$ is a set of $n+1$ points in a
Riemannian $n$-manifold, and $\splxs$ is contained in a sufficiently small
convex ball $B$, then we can define the Riemannian simplex
$\riemsplxs$ as the image of the standard simplex $\stdsplxn$ under
the barycentric coordinate map.

In order to make the concept of Riemannian simplices useful for
triangulating manifolds, we need to ensure that the barycentric
coordinate map is not only well-defined, but also an embedding. When
this is the case, we say that the Riemannian simplex is
\defn{nondegenerate}.  We recently established conditions, based on
the local curvatures of the manifold and the quality of the point set
$\sigma$, that ensure that the barycentric coordinate map is an
embedding \cite{RiemSimp2015}. 

Although we used the result on nondegenerate Riemannian simplices to
establish criteria for ensuring that a simplicial complex triangulates
a Riemannian manifold, we did not establish some simple properties
that we expect from geometric simplices. Specifically, if $\riemsplxs$
and $\riemsplxt$ are two Riemannian $n$-simplices in an $n$-manifold,
and they have $n$ vertices in common, and agree with a local
orientation of the manifold, then is $\riemsplxt \cap \riemsplxs$
exactly the Riemannian simplex that is the common facet? Of course it
is impossible to ensure a triangulation by Riemannian simplices
without ensuring this property, but in \cite{RiemSimp2015} we did not
show this property for two isolated simplices.

This is one of the motivations for establishing the barycentric
coordinate neighbourhoods that are the focus of the current work. If
the barycentric neighbourhood defined by $\riemsplxs$ encompasses
$\riemsplxt$ and vice versa, then the above problem is easily
resolved, as discussed in \Secref{sec:shared.facet}.

In order to establish such barycentric coordinate neighbourhoods we
must first ensure that the barycentric coordinate map is well defined
when the weights may be negative. To this end we give an alternate
proof of the existence and uniqueness of Riemannian centres of
mass. The result, \Thmref{thm:UniquenessKMnegative}, can accommodate 
signed measures, i.e., mass distributions that include negative
weights. This is the main technical contribution of this paper.

A demonstration of the existence and uniqueness of Riemannian centres
of mass for signed measures has already been published by Sander
\cite[Theorem~3.19]{Sander2}. He was motivated by a desire to describe
higher order interpolation on Riemannian manifolds, which needs to
accommodate negative interpolation weights. Sander's proof is modelled
on Groisser's proof for unsigned measures \cite{groisser2004}. The
result is stated in terms of several parameters governing the
neighbourhood of validity, and they are bounded in an intricate
manner. We are specifically interested in obtaining an explicit bound on
the extent of the admissible neighbourhood in terms of primitive
properties of the measure and bounds on the sectional curvatures. Such
an explicit bound is not easily obtained from Sander's result.

We provide an elementary proof of the existence and uniqueness
of Riemannian centres of mass for signed measures that is in the
spirit of Karcher's demonstration for the unsigned case. However,
instead of using Jacobi field estimates to demonstrate the convexity
of the energy function, we rely on a series expansion of the cosine
rule in constant curvature spaces. At the expense of this brute force
calculation, we obtain a concise and elementary
demonstration. Furthermore, our bounds on the admissible neighbourhood
are explicit, and they can be reduced to Karcher's bounds in the case where
the measure is unsigned.  This is especially evident in the
formulation of the result expressed in
\Corref{cor:simple.signed.measure}.

In \Secref{sec:background} we briefly review the tools we will use for
our exposition, including a recap of the main ideas in defining
Riemannian centres of mass and barycentric coordinates. We demonstrate
our main theorem on Riemannian centres of mass of signed measures in
\Secref{sec:NegWeight}, and \Secref{sec:barycoords} is devoted to
exploiting this for the development of barycentric coordinate
neighbourhoods that motivates this work.

In order to obtain true coordinate neighbourhoods we need to ensure
not only that the barycentric coordinate map is well defined, but also
that it is an embedding. We observe in \Secref{sec:barycoords} that we
can use the arguments in \cite{RiemSimp2015} verbatim since they did
not rely on the sign of the weights, only on the convexity of the
energy function, but this is established as part of the demonstration
of \Thmref{thm:UniquenessKMnegative}. We obtain
\Thmref{thm:bary.coord.nbhd.extent} and \Propref{prop:contained.ball}
which describe the extent of admissible barycentric coordinate
neighbourhoods. Finally, in \Secref{sec:shared.facet} we make the
observation that suitably constrained compatibly oriented Riemannian
$n$-simplices that share $n$-vertices intersect only on their shared
facet.


%

\section{Background and notation}
\label{sec:background}

In this work $\man$ refers to a $\cinfty$ Riemannian manifold (without
boundary) of dimension $n$. The adjective \defn{smooth} is synonymous
with $\cinfty$. A function defined on a closed set $A \subset \man$ is
\defn{smooth} if it can be extended to a smooth function on an open
neighbourhood of $A$.  The distance between $x,y \in \man$ is denoted
$\distM{x}{y}$, and
$\ballM{c}{r} = \{x \in \man \mid \distM{c}{x} < r\}$ is the open
geodesic ball of radius $r$ centred at $c\in \man$. The topological
closure of a set $B \subset \man$ is denoted $\close{B}$. A geodesic
is \defn{minimising} if it is the shortest path between any two of its
points (all geodesics are locally minimising). A
(geodesic) \defn{segment} is the trace of a minimising geodesic.  We
sometimes abuse notation and terminology by identifying a geodesic
with its trace.

The injectivity radius $\injradM$ is the supremum of the distances $r$
such that any two points $x,y \in \man$ with $\distM{x}{y}<r$ have a
unique minimising geodesic connecting them. This number is positive if
$\man$ is compact. For any $x \in \man$, the \defn{exponential map}
$\exp_x\colon T_x\man \to \man$ maps a vector%
\footnote{In general $\exp_x$ may not be defined on all of $T_x\man$,
  unless $\man$ is \defn{complete}, but this detail is not a concern
  for us.}  $v$ in the tangent space $T_x\man$ to the point
$\gamma(\norm{v}) \in \man$, where $\gamma$ is the unique unit-speed
geodesic emenating from $x$ with $\gamma'(0) = v/\norm{v}$, and
$\norm{v} = \bracketprod{v}{v}^{1/2}$ is the norm defined by the
Riemannian metric. If $r < \injradM$, then $\exp_x$ is a
diffeomorphism (onto its image) when restricted to
$\spaceball{T_x\man}{x}{r}$. Identifying $T_x\man$ with $\R^n$, this
yields a local coordinate system called \defn{Riemann normal
  coordinates}.

For any $x \in \man$, and any 2-dimensional subspace
$H \subseteq T_x\man$ we associate a \defn{sectional curvature}, which
is the Gaussian curvature at $x$ of the surface $\exp_x(H)$. We denote
by $K$ the sectional curvature function, but we are concerned only
with bounds on $K$, never individual values.

\subsection{Convexity}

A set $B \subseteq \man$ is \defn{convex} if for all $x,y \in B$ there
is a unique minimising geodesic in $\man$ connecting $x$ to $y$, and
this geodesic is contained in $B$. If $\curvupbnd$ is an upper bound
on the sectional curvatures of $\man$, then for any point $c \in \man$
the ball $\cballM{c}{r}$ is convex if
$r <  \min \bigl \{ \frac{\injradM}{2},
    \frac{\pi}{2\sqrt{\curvupbnd}} \bigr \}$,
where we set $\frac{\pi}{2\sqrt{\curvupbnd}}=\infty$ if
$\curvupbnd \leq 0$ (see \cite[Thm. IX.6.1]{Chavel}). 

A function $f\colon A \to \R$, with $A \subseteq \man$ is
\defn{convex} if for any $p \in A$ and geodesic $\gamma$ with
$\gamma(0) = p$, we have $\frac{d^2}{dt^2}f(\gamma(t))|_{t=0} \geq 0$.
We say $f$ is \defn{strictly convex} if this inequality is strict. If
$A$ is compact and convex with nonempty interior, and $f$ is strictly
convex with $\grad f$ pointing outwards on $\bdry{A}$ (i.e., for any
$x \in \bdry{A}$, $y \in \intr{A}$, and $\gamma$ a minimising geodesic
from $x$ to $y$, we have $\bracketprod{\gamma'(0)}{\grad f} < 0$),
then $f$ has a unique minimum in the interior of $A$. (The gradient
assumption precludes a minimum on the boundary, and the existence of
multiple minima contradicts strict convexity via the same argument as
in the Euclidean case.)

\subsection{Riemannian centre of mass}
\label{sec:riem.com}

Let $\mu$ be an unsigned measure whose support is contained within a
convex geodesic ball $B_\rho \subseteq \man$ with radius $\rho$.
Consider the energy function
$\genfctmu\colon \close{B}_{\rho} \to \R$ defined by
\begin{equation}
\label{eq:enfct}
\enfctmu{x} = \frac{1}{2} \int d_\man(x, y )^2 \,d\mu(y),
\end{equation}
where $d\mu(y)$ indcates that the integration is with respect to the
variable $y$, and the domain of integration is understood to be
$\man$, or equivalently $B_{\rho}$, since it contains the support of
$\mu$. Karcher \cite[Theorem~1.2]{Karcher} showed that $\genfctmu$ has
a unique minimum on $B_{\rho}$, provided that, if there are positive
sectional curvatures in $B_\rho$, the radius satisfies
$\rho \leq \frac{\pi}{4\sqrt{\cnstcurv}}$, where $\cnstcurv$ is an
upper bound on the sectional curvature in $B_\rho$.  This unique
minimum is called the \defn{Riemannian centre of mass} of $\mu$.

Since we will be working within these same constraints,
it is convenient to define
\begin{equation}
   \label{eq:rad.bnd}
   \rho_0 =  \min \left \{ \frac{\injradM}{2},
    \frac{\pi}{4\sqrt{\curvupbnd}} \right \},
\end{equation}
where $\curvupbnd$ is an upper bound on the sectional curvatures of
$\man$, and we understand $\frac{1}{\sqrt{\curvupbnd}}=\infty$ if
$\curvupbnd \leq 0$. Then $\rho < \rho_0$ is sufficient to enforce the
convexity condition and the curvature sensitive bound on the radius.

In the case of a discrete measure concentrated on a finite set of
points $\sigma = \{p_0,\ldots, p_k\} \subset B_{\rho}$, then instead of
$\mu$ we use $\lambda$, with $\lambda_i = \lambda(p_i)$, and 
when $\sum \lambda_i=1$
this
defines the \defn{barycentric coordinates} of the centre of mass. In
other words, the unique point $x \in B_{\rho}$ that has barycentric
coordinates $\lambda$ with respect to $\sigma$ is the point that
minimises
\begin{equation}
  \label{eq:disc.en.fct}
  \enfctbc{x} = \frac{1}{2} \sum_i \gbarycoord{i} \distM{x}{p_i}^2.
\end{equation}
The ``filled-in'' \defn{Riemannian simplex} $\riemsplxs$ defined by
$\splxs$ is defined to be the set of points in $B_\rho$ defined by all
nonnegative barycentric coordinates.

\subsection{Comparison theorems}

We denote by $\Hk = \Hk^n$ the simply connected space of dimension $n$
whose sectional curvatures are all equal to the constant
$\cnstcurv$. 
A comparison theorem provides inequalities relating geometry on a
manifold $\man$ to that on an appropriate $\Hk$. The comparison
theorems we state here are equivalent corollaries of the Rauch
comparison theorem \cite[6.4.1]{buser1981} and can be found in
\cite[6.4.3, 6.4.4]{Karcher2,buser1981}, and many other standard references.

The first comparison theorem captures the notion that geodesics
diverge more quickly in low curvature than in high curvature.
A \defn{hinge} in $\man$ consists of two geodesic segments of lengths
$\ell_1$ and $\ell_2$ meeting at a common endpoint at an angle
$\alpha$. A \defn{comparison hinge} is a hinge in $\Hk$ with the same
corresponding side lengths and angle.
Three points $w,x,y$ in a convex ball in $\man$ define a
hinge with the angle $\alpha$ at $x$ being the angle between the
unique geodesic segments connecting $w$ to $x$ and $x$ to $y$.

\begin{lem}[Hinge comparison]
  \label{lem:hinge.comparison}
  Assume the sectional curvature $\seccurv$ in $\man$ is bounded by
  $\curvlowbnd \leq K \leq \curvupbnd$, and let
  $B_{\rho} \subseteq \man$ be a convex ball
  with radius $\rho \leq \rho_0$ (as defined in \eqref{eq:rad.bnd}).
  Suppose
  $w,x,y \in B_{\rho}$ define a hinge with angle $\alpha$ at $x$, and
  let
  $\bar{w},\bar{x},\bar{y} \in \Hl$ and
  $\tilde{w},\tilde{x}, \tilde{y} \in \Hu$ define comparison hinges.

  Then
  \begin{equation*}
    d_{\Hu}(\tilde{w},\tilde{y}) \leq \distM{w}{y} \leq
    d_{\Hl}(\bar{w},\bar{y}). 
  \end{equation*}
\end{lem}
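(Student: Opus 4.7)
The plan is to derive the lemma as a standard consequence of the Rauch comparison theorem, which is the approach taken in the references cited above. I would compare the hinge in $\man$ with each model hinge by fixing a linear isometry $\iota\colon T_x\man \to T_{\tilde x}\Hu$ that sends the initial velocities of the two hinge sides to the corresponding initial velocities of the comparison hinge in $\Hu$; the common side lengths and hinge angle make this consistent and ensure that $\exp_{\tilde x}\circ\iota\circ\exp_x^{-1}$ identifies $w$ with $\tilde w$ and $y$ with $\tilde y$. The corresponding construction for $\Hl$ is entirely analogous, so I would focus on the upper bound.

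The main step is to analyse the function $L(s) = \distM{w}{\gamma_2(s)}$, where $\gamma_2\colon[0,\ell_2]\to\man$ is the hinge side ending at $y$, parametrised by arc length. By the first variation of arc length, $L'(s) = -\cos\theta(s)$, where $\theta(s)$ is the angle at $\gamma_2(s)$ between $\gamma_2'(s)$ and the minimising geodesic from $\gamma_2(s)$ to $w$; note that $L(0)=\ell_1$ and $\theta(0)=\alpha$. The analogous equation holds for $\tilde L(s) = d_{\Hu}(\tilde w, \tilde\gamma_2(s))$ in the model space, so the comparison of $L$ with $\tilde L$ reduces to a comparison of $\theta(s)$ with $\tilde\theta(s)$.

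To control $\theta(s)$, I would study the one-parameter family $\sigma_s$ of minimising geodesics from $w$ to $\gamma_2(s)$. The variation $s\mapsto\sigma_s$ produces a Jacobi field $J_s$ along each $\sigma_s$, with one endpoint fixed at $w$ and the other moving with velocity $\gamma_2'(s)$. Rauch's theorem yields a pointwise comparison of $\lVert J_s\rVert$ and its inner products with $\sigma_s'$ against the corresponding quantities in $\Hu$, and these in turn determine $\cos\theta(s)$ through the decomposition of $\gamma_2'(s)$ into components along and perpendicular to $\sigma_s'$. Integrating the resulting differential inequality $L'(s) \leq \tilde L'(s)$ from $0$ to $\ell_2$ against the matching initial value $L(0)=\tilde L(0)=\ell_1$ delivers $\tilde L(\ell_2) \leq L(\ell_2)$, which is the upper comparison; the lower comparison against $\Hl$ proceeds symmetrically with the inequalities reversed.

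The main obstacle is not conceptual but consists in the bookkeeping: one must verify that throughout the variation, all relevant pairs of points in $\man$, $\Hu$, and $\Hl$ are joined by unique minimising geodesics so that the first variation formula applies and $\theta(s)$ is everywhere well defined. The hypothesis $\rho\leq\rho_0$ with $\rho_0$ as in \eqref{eq:rad.bnd} is exactly what is needed: the constraint $\rho_0 \leq \injradM/2$ guarantees uniqueness of minimisers between any two points of the ball in $\man$, while $\rho_0 \leq \pi/(4\sqrt{\curvupbnd})$ keeps the comparison hinges well within the injectivity radii of $\Hu$ and $\Hl$, so no conjugate points are encountered along any $\sigma_s$ or its model counterpart and Rauch applies uniformly in $s$.
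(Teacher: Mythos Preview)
The paper does not supply a proof of this lemma; it is stated as a standard corollary of the Rauch comparison theorem with a citation to Buser--Karcher and other references. Your plan to derive it from Rauch is therefore exactly in line with the paper's treatment, which gives no further detail.

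Two remarks on the sketch itself. First, there is a sign slip: from $L'(s)\le\tilde L'(s)$ and $L(0)=\tilde L(0)$ one integrates to $L(\ell_2)\le\tilde L(\ell_2)$, the reverse of what you claim; for the $\Hu$ comparison you need $L'\ge\tilde L'$. Second, the angle-tracking scheme is more delicate than it appears: once $L(s)\ne\tilde L(s)$, the geodesics $\sigma_s$ and $\tilde\sigma_s$ have different lengths, and Rauch does not directly compare Jacobi fields along geodesics of unequal length, so obtaining the pointwise angle inequality requires additional work (this is essentially the content of a Toponogov-type argument). The cleaner textbook route is to pull the minimising geodesic from $w$ to $y$ back through $\exp_x^{-1}$, push it to $\Hu$ via $\exp_{\tilde x}\circ\iota$, and use Rauch on $d\exp_x$ versus $d\exp_{\tilde x}$ to show that the image curve is no longer than the original; since it joins $\tilde w$ to $\tilde y$, the desired inequality follows at once.
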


A \defn{geodesic triangle} $T$ is a collection of three points
(vertices) in $\man$, together with geodesic segments between them. A
\defn{comparison triangle} $T_{\cnstcurv}$ is a geodesic triangle
in $\Hk$ with the same edge lengths. Thus a vertex, edge, or angle in
$T$ has a corresponding vertex, edge, or angle in $T_{\cnstcurv}$.

The second comparison theorem says that triangles are fatter in spaces
with higher curvature.

\begin{lem}[Angle comparison]
  \label{lem:angle.comparison}
  Assume the sectional curvature $\seccurv$ in $\man$ is bounded by
  $\curvlowbnd \leq K \leq \curvupbnd$, and let $T$ be a geodesic
  triangle contained in a convex ball $B_{\rho} \subseteq \man$, with
  radius $\rho \leq \rho_0$ (as defined in \eqref{eq:rad.bnd}).

  Then comparison triangles $T_{\curvlowbnd}$ and $T_{\curvupbnd}$
  exist in $\Hl$ and $\Hu$. For any angle $\alpha$ in $T$, the
  corresponding angles in $T_{\curvlowbnd}$ and $T_{\curvupbnd}$
  satisfy
  \begin{equation*}
    \alpha^{}_{\curvlowbnd} \leq \alpha \leq \alpha^{}_{\curvupbnd}.
  \end{equation*}
\end{lem}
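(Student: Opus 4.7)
The plan is to deduce angle comparison as a straightforward consequence of the already-stated hinge comparison (\Lemref{lem:hinge.comparison}), by exploiting the fact that angle comparison and hinge comparison are dual: the hinge lemma fixes two sides plus an included angle and bounds the opposite side, while the statement at hand fixes three sides and bounds the angles. The bridge between the two is the strict monotonicity, in the constant-curvature model spaces $\Hu$ and $\Hl$, of the third side of a triangle as a function of the included angle (the two adjacent sides being held fixed).

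First I would verify that the comparison triangles $T_{\curvupbnd}$ and $T_{\curvlowbnd}$ exist. Since $T \subseteq B_{\rho}$ with $\rho \leq \rho_0$, each edge of $T$ has length at most $2\rho_0$, hence at most $\tfrac{\pi}{2\sqrt{\curvupbnd}}$ when $\curvupbnd > 0$, and the perimeter is at most $6\rho_0 \leq \tfrac{3\pi}{2\sqrt{\curvupbnd}} < \tfrac{2\pi}{\sqrt{\curvupbnd}}$. These are exactly the triangle-inequality and (in the positively curved case) great-circle-perimeter conditions that guarantee a triangle with these side lengths exists in the simply-connected model space $\Hu$. For $\Hl$ the conditions are weaker, and are automatic if $\curvlowbnd \leq 0$.

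Next, fix a vertex $x$ of $T$ with neighbouring vertices $w, y$, and let $a = \distM{x}{y}$, $b = \distM{w}{x}$, $c = \distM{w}{y}$, with $\alpha$ the angle at $x$. Applying \Lemref{lem:hinge.comparison} to the hinge at $x$, the hinge in $\Hu$ with angle $\alpha$ and adjacent sides $b, a$ has third side of length $\tilde{c} \leq c$, while the corresponding hinge in $\Hl$ has third side $\bar{c} \geq c$. By construction, $T_{\curvupbnd}$ is a triangle in $\Hu$ with side lengths $a, b, c$ whose included angle at the corresponding vertex is $\alpha_{\curvupbnd}$, and similarly for $T_{\curvlowbnd}$. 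Invoking the monotonicity of the constant-curvature law of cosines (valid because the bound $\rho \leq \rho_0$ keeps every side length of each model-space triangle strictly below $\tfrac{\pi}{2\sqrt{\curvupbnd}}$, well within the regime where the cosine rule is strictly monotone in the included angle), the inequalities $\tilde{c} \leq c$ and $c \leq \bar{c}$ translate respectively into $\alpha \leq \alpha_{\curvupbnd}$ and $\alpha_{\curvlowbnd} \leq \alpha$.

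The main obstacle I anticipate is bookkeeping the size-control: verifying that the model-space triangle and its hinge partner both fit into a ball on which the law of cosines is strictly monotone in the included angle. This is where the precise definition of $\rho_0$ (in particular the factor of $1/4$ rather than $1/2$) earns its keep; once this is in place, the angle bounds follow mechanically from the hinge lemma and monotonicity.
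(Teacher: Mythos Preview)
The paper does not actually prove this lemma: it is stated as background, with the remark that both comparison theorems are ``equivalent corollaries of the Rauch comparison theorem'' and a citation to standard references \cite[6.4.3, 6.4.4]{Karcher2,buser1981}. So there is no in-paper argument to compare against.

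Your approach---deducing angle comparison from the already-stated hinge comparison by exploiting the strict monotonicity of the opposite side as a function of the included angle in the model spaces---is a standard and correct way to pass between the two formulations, and it is in the spirit of the paper's remark that the two lemmas are equivalent. The existence check for the comparison triangles and the size bookkeeping are handled adequately by $\rho \leq \rho_0$; the only minor imprecision is that monotonicity of the spherical law of cosines requires the third side to stay below $\pi/\sqrt{\curvupbnd}$ (not $\pi/(2\sqrt{\curvupbnd})$ as you write), but since all sides are bounded by $2\rho_0 \leq \pi/(2\sqrt{\curvupbnd})$ this is comfortably satisfied.
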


\subsection{Signed measures}
\label{sec:jordan.decomp}

In this work we are only concerned with finite (i.e., bounded) Borel
measures. We will be considering signed measures $\mu$ on $\man$.  The
Jordan decomposition theorem \cite[\S 29]{halmos1974measure}, states
that there are unique unsigned measures $\mu_+$ and $\mu_-$ such that
$\mu = \mu_+ - \mu_-$, and for our purposes we can take this as a
definition of a signed measure.

If $\nu$ is an unsigned measure on $\man$, then the \defn{support} of
$\nu$ is the closed set
\begin{equation*}
  \supp(\nu) = \{p \in \man \mid \nu(U)>0 \text{ for all open }
  U \subseteq \man \text{ with } p \in U \}.
\end{equation*}
The support of a signed measure $\mu$ is defined by $\supp(\mu) 
= \supp(\mu_+) \cup \supp(\mu_-)$.



\section{Center of mass of signed measures}
\label{sec:NegWeight}

We consider a signed measure $\mu$ with support contained in a convex
ball $B_\rho \subset \man$. In this section we 
establish constraints on $\rho$ in terms of $\mu$ and bounds on the
sectional curvatures that guarantee two properties which together are
sufficient to guarantee that $\genfctmu$ has a unique minimum in
$B_\rho$.  In \Secref{sec:Emu.convex} we show when $\genfctmu$ is
guaranteed to be strictly convex, and in \Secref{sec:grad.out} we find
conditions that ensure that the gradient of $\genfctmu$ is pointing
outwards on $\bdry{B_\rho}$. This results in the main theorem of this
section:

\begin{thm}[Centre of mass of signed measures]
  \label{thm:UniquenessKMnegative}
  Let $M$ be a manifold whose sectional curvature $K$ is bounded by
  $\curvlowbnd \leq K \leq \curvupbnd$, and let $\mu$ be a signed
  measure on $\man$, whose support is contained in a geodesic ball
  $\cballM{c}{r}$.  Suppose $B_\rho = \ballM{c}{\rho}$, with
  $r < \rho < \rho_0$ as defined in \eqref{eq:rad.bnd}.

  Then
  $\genfctmu\colon \close{B}_\rho \to \R$ (\Eqnref{eq:enfct}) has a
  unique minimum in $B_\rho$ if
  \begin{equation*}
    (\rho - r)\mup - (\rho + r)\mun>0,
  \end{equation*}
  and
  \begin{align*}
    \frac{\tu}{\tan\tu}\mup - \mun 
    &>0
    &\text{if } 0 \leq \curvlowbnd \leq \curvupbnd,\\
    \frac{\tu}{\tan\tu}\mup -
    \frac{\tl}{\tanh\tl}\mun
    &>0
    &\text{if } \curvlowbnd \leq 0 \leq \curvupbnd,\\
    \mup - \frac{\tl }{\tanh \tl}\mun 
    &>0
    &\text{if } \curvlowbnd \leq \curvupbnd \leq 0,
  \end{align*}
  where $\tl = 2\rho\sqrt{\abs{\curvlowbnd}}$, and $\tu =
  2\rho\sqrt{\abs{\curvupbnd}}$.
\end{thm}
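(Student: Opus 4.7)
The plan is to apply the variational criterion noted at the end of Section~2.1: since $\close{B}_\rho$ is compact and convex, a strictly convex function on $\close{B}_\rho$ whose gradient points outward on $\bdry{B_\rho}$ attains a unique minimum in $B_\rho$. By the Jordan decomposition, $\genfctmu = \genfct{\mu_+} - \genfct{\mu_-}$, and both the Hessian and the gradient of $\genfctmu$ commute with the integrals against $\mu_\pm$. The problem therefore reduces to pointwise estimates, uniform in $y\in\supp(\mu)$, of the Hessian and the gradient of the elementary function $f_y(x) = \tfrac12 \distM{x}{y}^2$, followed by integration against $\mu_+$ and $\mu_-$.

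For strict convexity, I fix a unit-speed geodesic $\gamma$ with $\gamma(0) = x \in \close{B}_\rho$ and analyse $\Hess f_y(\gamma'(0), \gamma'(0))$. In a space of constant sectional curvature $\cnstcurv$ this has the closed form $\cos^2\theta + h_\cnstcurv(\distM{x}{y})\sin^2\theta$, where $\theta$ is the angle between $\gamma'(0)$ and the geodesic from $x$ to $y$, and $h_\cnstcurv(d)$ equals $\sqrt{\cnstcurv}\,d\cot(\sqrt{\cnstcurv}\,d)$ for $\cnstcurv > 0$, $1$ for $\cnstcurv = 0$, and $\sqrt{|\cnstcurv|}\,d\coth(\sqrt{|\cnstcurv|}\,d)$ for $\cnstcurv < 0$. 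Since $\theta\cot\theta$ is decreasing and $\theta\coth\theta$ is increasing on the relevant intervals, and $\distM{x}{y}\leq 2\rho$ throughout $\close{B}_\rho$, these factors are controlled by their endpoint values $\tu/\tan\tu$ and $\tl/\tanh\tl$. Rauch's comparison theorem then transports these bounds to $\man$ under $\curvlowbnd \leq K \leq \curvupbnd$: on the $\mu_+$ side the Hessian is bounded below by the value in constant curvature $\curvupbnd$, and on the $\mu_-$ side it is bounded above by the value in constant curvature $\curvlowbnd$. Splitting into the three sign regimes of $(\curvlowbnd,\curvupbnd)$ produces the three convexity inequalities of the theorem, with the $\mu_-$ coefficient collapsing to $1$ when $\curvlowbnd\geq 0$ and the $\mu_+$ coefficient collapsing to $1$ when $\curvupbnd\leq 0$.

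For the outward gradient, I take $x\in\bdry{B_\rho}$ and let $\gamma$ be the minimising geodesic from $x$ to $c$. Since $\grad f_y(x) = -\exp_x^{-1}(y)$, the inward directional derivative is
\[
\bracketprod{\gamma'(0)}{\grad \enfctmu(x)} = -\int \distM{x}{y}\cos\alpha(y)\, d\mu(y),
\]
where $\alpha(y)$ is the angle at $x$ in the geodesic triangle $\{x,c,y\}$, and we need this quantity to be negative. In Euclidean space the scalar $\distM{x}{y}\cos\alpha$ equals $\rho - \bracketprod{x-c}{y-c}/\rho$, which lies in $[\rho-r,\rho+r]$ because $\distM{c}{y}\leq r$. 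To transfer this to $\man$, I would express $\cos\alpha$ through the law of cosines in $\Hk$ with side lengths taken from the hinge comparison of \Lemref{lem:hinge.comparison}, and expand the result in $\sqrt{|\cnstcurv|}\,d$ to extract the Euclidean leading term $\rho\pm r$ with a controlled remainder. This yields $\distM{x}{y}\cos\alpha \geq \rho - r$ on $\supp(\mu_+)$ and $\distM{x}{y}\cos\alpha \leq \rho + r$ on $\supp(\mu_-)$, so that integration delivers the first inequality $(\rho-r)\mup - (\rho+r)\mun > 0$ of the theorem.

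The main technical obstacle is this outward-gradient estimate: the cosine-rule series expansion must be kept uniform over $\distM{x}{y}\leq 2\rho < \pi/(2\sqrt{\curvupbnd})$ and over all three sign regimes of curvature, and the remainder must be small enough that the bound degenerates cleanly to $\rho\pm r$ so that Karcher's criterion is recovered when $\mu_- = 0$. The convexity half is by comparison a clean application of Rauch's theorem, but even there some bookkeeping is needed to match each factor in the Hessian bound with the correct piece of the Jordan decomposition.
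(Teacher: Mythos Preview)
Your convexity half is essentially the paper's argument: the paper derives the constant-curvature Hessian $\cos^2\theta + h_\cnstcurv(d)\sin^2\theta$ via a second-order expansion of the cosine rule (its Lemma on $d_{\Hk}(w,y)^2$) and then invokes the hinge comparison, which is equivalent to your direct appeal to Rauch. The monotonicity of $\vartheta\cot\vartheta$ and $\vartheta\coth\vartheta$, the $2\rho$ diameter bound, and the case split on the signs of $\curvlowbnd,\curvupbnd$ all match.

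The outward-gradient half has a real gap. Your plan is to obtain $\distM{x}{y}\cos\alpha \in [\rho-r,\rho+r]$ by expanding a constant-curvature cosine rule in $\sqrt{|\cnstcurv|}\,d$ and controlling the remainder; you yourself flag this as the main obstacle. But the theorem asserts the \emph{exact} Euclidean coefficients $\rho\pm r$, with no curvature correction, so a series expansion with nonzero remainder cannot close the argument as stated. The paper handles the two sides very differently. The $\mu_-$ bound $\distM{x}{y}\cos\alpha \leq \rho+r$ is immediate from $\cos\alpha\leq 1$ and the triangle inequality $\distM{x}{y}\leq \distM{x}{c}+\distM{c}{y}\leq \rho+r$; no comparison is needed. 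The $\mu_+$ bound $\distM{x}{y}\cos\alpha \geq \rho-r$ is obtained not by series expansion but by a contradiction argument using the \emph{angle} comparison theorem (not the hinge comparison you propose): one passes to a comparison triangle in constant curvature $\curvupbnd>0$, applies the spherical cosine rule \emph{exactly}, and uses the elementary inequality $\sin(as)\leq a\sin s$ for $a\geq 1$, $0\leq as\leq \pi/2$, to force $\varphi=0$ whenever $b\leq (\rho-r)/\cos\varphi$. This yields the clean $\rho-r$ with no remainder term to absorb.
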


\subsection{Ensuring the convexity of $\genfctmu$}
\label{sec:Emu.convex}

Our demonstration of the convexity of $\genfctmu$ relies on an
asymptotic expansion of the squared distance function in constant
curvature spaces. We consider points $x,y \in \Hk$, and for any point
$w$ close to $x$ we express the
distance $d_{\Hk}(w,y)$ in terms of $d_{\Hk}(x,y)$ and terms involving
powers of $\delta = d_{\Hk}(x,w)$. 
%

\begin{lem}
  \label{lem:square.dist.estimates}
  Suppose $B_\rho \subset \Hk$ is a geodesic ball of radius $\rho$,
  and $w,x,y \in B_\rho$ define a hinge with angle $\alpha$ at $x$,
  and $d_{\Hk}(x,w)= \delta$.  If $\cnstcurv>0$, assume
  $\rho < \frac{\pi}{4\sqrt{\cnstcurv}}$.

  Then 
  \begin{equation*}
    d_{\Hk}(w,y)^2 =
    \norm{\exp_x^{-1}(w) - \exp_x^{-1}(y)}^2
    + (f_\cnstcurv(\alpha,x,y) -1)\delta^2 + O(\delta^3),
  \end{equation*}
  where
  \begin{equation*}
    f_\cnstcurv(\alpha,x,y) = 
    \begin{cases}
      \cos^2\alpha + \frac{\vartheta}{\tan\vartheta}\sin^2\alpha
      &\text{if } \cnstcurv \geq 0,\\
      \cos^2\alpha + \frac{\vartheta}{\tanh\vartheta}\sin^2\alpha 
      &\text{if } \cnstcurv \leq 0,
    \end{cases}
  \end{equation*}
  and $\vartheta = d_{\Hk}(x,y)\sqrt{\abs{\cnstcurv}}$.
\end{lem}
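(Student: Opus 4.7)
The plan is to derive the expansion by Taylor expanding the constant-curvature law of cosines in the parameter $\delta = d_{\Hk}(x,w)$. Write $b = d_{\Hk}(x,y)$ and $c = d_{\Hk}(w,y)$. Since $\exp_x$ preserves lengths of tangent vectors and the angle they make at $x$, we have
\begin{equation*}
\onorm{\exp_x^{-1}(w) - \exp_x^{-1}(y)}^2 = \delta^2 + b^2 - 2\delta b\cos\alpha,
\end{equation*}
so the claim is equivalent to showing that
\begin{equation*}
c^2 = b^2 - 2\delta b\cos\alpha + f_\cnstcurv(\alpha,x,y)\,\delta^2 + O(\delta^3).
\end{equation*}

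First I would treat the case $\cnstcurv > 0$ and set $k = \sqrt{\cnstcurv}$. The spherical law of cosines in $\Hk$ reads
\begin{equation*}
\cos(kc) = \cos(k\delta)\cos(kb) + \sin(k\delta)\sin(kb)\cos\alpha.
\end{equation*}
The assumption $\rho < \pi/(4\sqrt{\cnstcurv})$ ensures $kc < \pi/2$, so $\cos(kc)$ determines $c$ uniquely. Substituting the Taylor expansions $\cos(k\delta) = 1 - k^2\delta^2/2 + O(\delta^4)$ and $\sin(k\delta) = k\delta + O(\delta^3)$ gives
\begin{equation*}
\cos(kc) = \cos(kb) + k\delta\sin(kb)\cos\alpha - \tfrac{1}{2}k^2\delta^2\cos(kb) + O(\delta^3).
\end{equation*}
Next I would posit $c = b + c_1\delta + c_2\delta^2 + O(\delta^3)$ and expand $\cos(kc)$ in powers of $\delta$, matching coefficients: the linear term forces $c_1 = -\cos\alpha$, and the quadratic term gives $c_2 = \tfrac{k\cos(kb)\sin^2\alpha}{2\sin(kb)}$. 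Squaring then yields
\begin{equation*}
c^2 = b^2 - 2b\delta\cos\alpha + \Bigl(\cos^2\alpha + \tfrac{\vartheta}{\tan\vartheta}\sin^2\alpha\Bigr)\delta^2 + O(\delta^3),
\end{equation*}
with $\vartheta = bk$, which is exactly the stated formula.

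For $\cnstcurv < 0$ I would run the identical argument starting from the hyperbolic law of cosines
\begin{equation*}
\cosh(kc) = \cosh(k\delta)\cosh(kb) - \sinh(k\delta)\sinh(kb)\cos\alpha,
\end{equation*}
with $k=\sqrt{-\cnstcurv}$. The expansions differ only by signs, and the hyperbolic identity $\frac{\cosh\vartheta}{\sinh\vartheta} = \frac{1}{\tanh\vartheta}$ replaces the trigonometric ratio, giving the $\tanh$ form of $f_\cnstcurv$. The case $\cnstcurv=0$ is the Euclidean law of cosines and drops out by continuity (or direct substitution, with $\vartheta/\tan\vartheta \to 1$ as $\vartheta \to 0$).

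The calculation is essentially routine; the only delicate point is making the $O(\delta^3)$ control uniform in $x,y \in B_\rho$. This is where the hypothesis $\rho < \pi/(4\sqrt{\cnstcurv})$ is used: it keeps $\sin(kb)$ bounded away from $0$ (respectively $\sinh(kb)$ bounded) so that the coefficient $c_2$ remains finite, and it keeps $\cos(kc)$ monotonic in $c$ so that solving for $c$ from $\cos(kc)$ incurs only bounded derivatives, making the higher-order remainder genuinely $O(\delta^3)$ rather than blowing up near $b = 0$ or near the boundary of the convex ball.
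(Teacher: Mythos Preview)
Your argument is correct and follows the same route as the paper: both obtain the expansion by Taylor expanding the constant-curvature law of cosines in $\delta$, the only difference being that the paper delegates the series computation to a computer algebra system while you match coefficients by hand. One small correction to your closing paragraph: the hypothesis $\rho < \pi/(4\sqrt{\cnstcurv})$ does not keep $\sin(kb)$ bounded away from $0$ (since $b$ may be small); rather, it ensures $kb, kc \in (0,\pi/2)$ so that $\cos$ is invertible with smooth inverse there, and the apparent blow-up of your coefficient $c_2$ as $b\to 0$ is harmless because only the product $2bc_2 = \tfrac{\vartheta}{\tan\vartheta}\sin^2\alpha$ enters the expansion of $c^2$.
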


\begin{proof}
  \def\rk{\sqrt{\cnstcurv}}
  \def\rmk{\sqrt{-\cnstcurv}}
  The result is obtained directly from a series expansion of the
  cosine rule for a space of constant curvature. Letting
  $a=d_{\Hk}(w,y)$, and $c=d_{\Hk}(x,y)=\norm{\exp_x^{-1}(y)}$, the
  cosine rules are (see e.g.,
  \cite[\S\S18.6.8, 19.3.1]{BergerGeometryII}): 
  \begin{align*}
    \cos(a\rk)
&= \cos(c\rk)\cos(\delta\rk)
+ \sin(c\rk)\sin(\delta\rk)\cos\alpha
&\text{if } \cnstcurv > 0,\\
a^2
&= c^2 + \delta^2 -2c\delta\cos\alpha
&\text{if } \cnstcurv = 0,\\
\cosh (a\rmk)
&= \cosh(c\rmk)\cosh(\delta\rmk)
- \sinh(c\rmk)\sinh(\delta\rmk)\cos\alpha
&\text{if } \cnstcurv < 0.
  \end{align*}

With the aid of a computer algebra system, we compute the series
expansion with respect to $\delta$ and find
\begin{multline*}
\left(\cnstcurv^{-1/2}\arccos\left(
\cos (c\rk)\cos(\delta\rk)
+ \sin(c\rk)\sin(\delta\rk)\cos\alpha
\right) \right)^2
\\=
c^2 -2c\delta\cos\alpha 
+ \left(\cos^2\alpha + \frac{c\rk}{\tan (c\rk)}\sin^2\alpha \right)\delta^2
+O(\delta^3),
\end{multline*}
when $\cnstcurv> 0$ and 
$c\rk$ and $\delta\rk$ are less than $\frac\pi2$, and
\begin{multline*}
\left((-\cnstcurv)^{-1/2}\arccosh\left(
\cosh(c\rmk)\cosh(\delta\rmk)
+ \sinh(c\rmk)\sinh(\delta\rmk)\cos\alpha
\right) \right)^2
\\=
c^2 -2c\delta\cos\alpha 
+ \left(\cos^2\alpha + \frac{c\rmk}{\tanh (c\rmk)}\sin^2\alpha \right)\delta^2
+O(\delta^3),
\end{multline*}
when $\cnstcurv<0$.

Since the Euclidean cosine rule in this notation yields
\begin{equation*}
\norm{\exp_x^{-1}(w) - \exp_x^{-1}(y)}^2
= c^2 - 2c\delta\cos\alpha + \delta^2, 
\end{equation*}
the result follows.
\end{proof}

Now let $\man$ be an arbitrary Riemannian manifold with sectional
curvature $K$ bounded by $\curvlowbnd \leq K \leq \curvupbnd$, and let
$B_\rho$ be a geodesic ball of radius
$\rho < \rho_0$, as defined in \eqref{eq:rad.bnd}. We consider
a hinge defined by $w,x,y \in B_\rho$, 
and let
$\bar{w},\bar{x},\bar{y} \in \Hl$ and
$\tilde{w},\tilde{x}, \tilde{y} \in \Hu$ define comparison hinges.
Then the hinge comparison theorem (\Lemref{lem:hinge.comparison}) says
\begin{equation}
  \label{eq:compare.sq.dist}
  d_{\Hu}(\tilde{w},\tilde{y})^2 \leq \distM{w}{y}^2 \leq
  d_{\Hl}(\bar{w},\bar{y})^2. 
\end{equation}

Consider now that $w = w(t)$ is a point on a unit-speed geodesic with
$w(0) = x$, so that
$\distM{x}{w(t)} = \norm{\exp_x^{-1}(w(t))} = \pm t$, and associate
the angle $\alpha$ with the positive values of $t$, i.e.,
$\bracketprod{w'(0)}{\exp_x^{-1}(y)} = \norm{\exp_x^{-1}(y)}\cos
\alpha$.
Employing \Lemref{lem:square.dist.estimates} on the bounds
\eqref{eq:compare.sq.dist}, and expanding the squared norm using the
inner product, we find
\begin{equation*}
  \distM{w(t)}{y}^2 = \norm{\exp_x^{-1}(y)}^2
  -2t\norm{\exp_x^{-1}(y)}\cos \alpha + O(t^2).
\end{equation*}
From this we conclude that
\begin{equation*}
  \frac{d}{dt}\enfctmu{w(t)}|^{}_{t=0} = - \int
  \bracketprod{w'(0)}{\exp_x^{-1}(y)} \, d\mu(y),
\end{equation*}
and since the geodesic $w$ through $x$ was arbitrary, and
\begin{equation*}
  \frac{d}{dt}\enfctmu{w(t)}|^{}_{t=0} 
= \bracketprod{w'(0)}{\grad \genfctmu(x)},
\end{equation*}
by definition of the gradient, we see that
\begin{equation}
  \label{eq:grad.enfct}
  \grad \enfctmu{x} = - \int \exp_x^{-1}(y) \,d\mu(y).
\end{equation}

We use the same technique to get a lower bound on the second derivative.
\emph{If $\mu$ were an unsigned measure}, then the lower bound in
\eqref{eq:compare.sq.dist}, would yield
\begin{equation}
  \label{eq:pos.meas.conv.bnd}
  \frac{d^2}{dt^2}\enfctmu{w(t)}|^{}_{t=0}
  \geq \int f_{\Hu}(\alpha,x,y) \,d\mu(y).
\end{equation}
Our bound
$d_{\Hu}(\tilde{x},\tilde{y}) = \distM{x}{y} < 2\rho_0 \leq
\frac{\pi}{2\sqrt{\curvupbnd}}$
ensures that $f_{\Hu}>0$ when $x$ is distinct from $y$. It follows
that if $\mu$ is an unsigned measure, then $\genfctmu$ is strictly
convex in $B_\rho$.

However, for the case of a signed measure that interests us here, more
work is required. Using the Jordan decomposition
(\Secref{sec:jordan.decomp}), our lower bound now becomes
\begin{equation}
  \label{eq:second.deriv.bnd}
  \frac{d^2}{dt^2}\enfctmu{w(t)}|^{}_{t=0}
  \geq \int f_{\Hu}(\alpha,x,y) \,d\mu_+(y)
  - \int f_{\Hl}(\alpha,x,y) \,d\mu_-(y),
\end{equation}
where we have used both of the inequalities in
\eqref{eq:compare.sq.dist}.  We wish to ensure that this bound is
strictly positive.

Observe that for $\vartheta \in [0,\frac{\pi}{2})$, the function
$\vartheta/\tan\vartheta$ is monotonically decreasing, and
$\vartheta/\tanh\vartheta$ is monotonically increasing for
$\vartheta \geq 0$. Let $z = \distM{x}{y}$. Writing
$\vartheta_{\cnstcurv}(z) = z\sqrt{\abs{\cnstcurv}}$ for
the variable that appears in the definition of $f_\cnstcurv$, we have
\begin{align*}
  \frac{\vartheta_{\cnstcurv}(2\rho)}{\tan\vartheta_\cnstcurv(2\rho)}
  &<
    \frac{\vartheta_{\cnstcurv}(z)}{\tan\vartheta_\cnstcurv(z)}
    \leq 1 &\text{if } \cnstcurv \geq 0,\\
  1 &\leq     \frac{\vartheta_{\cnstcurv}(z)}{\tanh\vartheta_\cnstcurv(z)}
      <
      \frac{\vartheta_{\cnstcurv}(2\rho)}{\tanh\vartheta_\cnstcurv(2\rho)}
      &\text{if } \cnstcurv \leq 0.
\end{align*}

Finally, by defining $\tl = 2\rho\sqrt{\abs{\curvlowbnd}}$ and
$\tu = 2\rho\sqrt{\abs{\curvupbnd}}$, and expanding $f_{\Hu}$ and
$f_{\Hl}$ in \eqref{eq:second.deriv.bnd} using these inequalities, we
obtain the convexity property of $\genfctmu$ desired for
\Thmref{thm:UniquenessKMnegative}: 

\begin{lem}
  \label{lem:enfct.convex}
  Let $\man$ be a Riemannian manifold with sectional curvature $K$
  bounded by $\curvlowbnd \leq K \leq \curvupbnd$.  Let
  $B_\rho \subseteq \man$ be a geodesic ball of radius
  $\rho < \rho_0$, and $\mu$ a signed measure with support in $B_\rho$.

  The function $\genfctmu\colon \close{B}_\rho \to
  \R$ of \Eqnref{eq:enfct} is strictly convex provided
  \begin{align*}
    \frac{\tu}{\tan\tu} \mup - \mun 
    &>0
    &\text{if } 0 \leq \curvlowbnd \leq \curvupbnd,\\
    \frac{\tu}{\tan\tu}  \mup -
    \frac{\tl}{\tanh\tl}\mun
    &>0
    &\text{if } \curvlowbnd \leq 0 \leq \curvupbnd,\\
    \mup - \frac{\tl }{\tanh \tl}\mun 
    &>0
    &\text{if } \curvlowbnd \leq \curvupbnd \leq 0,
  \end{align*}
  where $\tl = 2\rho\sqrt{\abs{\curvlowbnd}}$, and $\tu =
  2\rho\sqrt{\abs{\curvupbnd}}$.
\end{lem}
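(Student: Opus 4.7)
The plan is to derive the convexity directly from the lower bound \eqref{eq:second.deriv.bnd} on the directional second derivative of $\genfctmu$ by uniformly bounding the two integrands from opposite sides. Specifically, I want to show that for each of the three curvature regimes, the integrand $f_{\Hu}(\alpha,x,y)$ against $d\mu_+$ admits a constant positive lower bound $c_+$ independent of $\alpha$, $x$, and $y$, while $f_{\Hl}(\alpha,x,y)$ against $d\mu_-$ admits a constant upper bound $c_-$, yielding
\[
\frac{d^2}{dt^2}\enfctmu{w(t)}\big|_{t=0} \;\geq\; c_+\,\mup \;-\; c_-\,\mun,
\]
which is strictly positive under the stated hypotheses, giving strict convexity.

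The main step is to observe that in the definition of $f_\cnstcurv$, the coefficient of $\sin^2\alpha$ is $g_\cnstcurv(z) := \vartheta/\tan\vartheta$ (if $\cnstcurv \geq 0$) or $\vartheta/\tanh\vartheta$ (if $\cnstcurv \leq 0$), where $\vartheta = z\sqrt{|\cnstcurv|}$ and $z = \distM{x}{y} \leq 2\rho$. Since $\sin^2\alpha + \cos^2\alpha = 1$, the value $f_\cnstcurv(\alpha,x,y)$ lies between $\min(1, g_\cnstcurv(z))$ and $\max(1, g_\cnstcurv(z))$. Combining this with the monotonicity inequalities displayed just before the statement of the lemma — namely that $g_\cnstcurv$ is bounded above by $1$ and below by $\tu/\tan\tu$ when $\cnstcurv \geq 0$, and bounded below by $1$ and above by $\tl/\tanh\tl$ when $\cnstcurv \leq 0$ — yields a uniform lower bound on $f_{\Hu}$ and a uniform upper bound on $f_{\Hl}$ depending only on the sign of $\curvupbnd$ and $\curvlowbnd$.

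Carrying out the bookkeeping in the three regimes: if $0 \leq \curvlowbnd \leq \curvupbnd$, then $f_{\Hu} \geq \tu/\tan\tu$ and $f_{\Hl} \leq 1$; if $\curvlowbnd \leq 0 \leq \curvupbnd$, then $f_{\Hu} \geq \tu/\tan\tu$ and $f_{\Hl} \leq \tl/\tanh\tl$; and if $\curvlowbnd \leq \curvupbnd \leq 0$, then $f_{\Hu} \geq 1$ and $f_{\Hl} \leq \tl/\tanh\tl$. Substituting into \eqref{eq:second.deriv.bnd} reduces the problem to verifying that the resulting lower bound on the second derivative matches the expressions in the hypotheses, which are assumed positive. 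Since this holds along every unit-speed geodesic through every $x \in B_\rho$, $\genfctmu$ is strictly convex.

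There is no substantive obstacle here; all the analytic work was carried out in Lemma \ref{lem:square.dist.estimates} and in the passage from the hinge comparison to \eqref{eq:second.deriv.bnd}. The only care required is keeping the three sign combinations of $(\curvlowbnd,\curvupbnd)$ straight and ensuring that in each case the hypothesis matches the correct lower-bound-on-$f_{\Hu}$ and upper-bound-on-$f_{\Hl}$; strictness of the final inequality is inherited directly from the strictness of the assumed inequalities, since $\mup$ and $\mun$ are finite.
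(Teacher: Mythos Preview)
Your proposal is correct and follows essentially the same approach as the paper: the paper likewise starts from \eqref{eq:second.deriv.bnd}, uses the monotonicity of $\vartheta/\tan\vartheta$ and $\vartheta/\tanh\vartheta$ to bound the $\sin^2\alpha$ coefficient uniformly over $z = \distM{x}{y} < 2\rho$, and then reads off the three curvature regimes. Your observation that $f_\cnstcurv$ is a convex combination (with weights $\cos^2\alpha$, $\sin^2\alpha$) of $1$ and $g_\cnstcurv(z)$ is exactly the mechanism the paper uses when it says to ``expand $f_{\Hu}$ and $f_{\Hl}$ in \eqref{eq:second.deriv.bnd} using these inequalities.''
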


\begin{rmk}
  \Eqnref{eq:pos.meas.conv.bnd} encompasses the bound on the second
  derivative announced by Karcher \cite[(1.2.2)~and (1.2.3)]{Karcher}
  in the following sense.  If $\mu$ is an unsigned measure with
  $\mu(\man)=1$, then if $\curvupbnd<0$, \eqref{eq:pos.meas.conv.bnd}
  may be expanded as
  \begin{equation*}
  \frac{d^2}{dt^2}\enfctmu{w(t)}|^{}_{t=0}
  \geq \int \left(\cos^2\alpha
    + \frac{\vartheta}{\tanh\vartheta}\sin^2\alpha \right) \,d\mu(y)
  \geq \cos^2\alpha
    + \frac{2\rho\rl}{\tanh(2\rho\rl)}\sin^2\alpha 
    \geq 1,
  \end{equation*}
  and if $\curvupbnd>0$, then
  \begin{equation*}
  \frac{d^2}{dt^2}\enfctmu{w(t)}|^{}_{t=0}
  \geq \int \left(\cos^2\alpha
    + \frac{\vartheta}{\tan\vartheta}\sin^2\alpha  \right) \,d\mu(y)
  \geq \cos^2\alpha
    + \frac{2\rho\rl}{\tan(2\rho\rl)}\sin^2\alpha 
    \geq \frac{2\rho\rl}{\tan(2\rho\rl)}.
  \end{equation*}
  The right hand side in each case corresponds to Karcher's bound.
\end{rmk}

\subsection{The gradient of $\genfctmu$ on the boundary of a ball}
\label{sec:grad.out}

We have a signed measure $\mu$ with support in $\cballM{c}{r}$, and as
usual the sectional curvatures on $\man$ satisfy the upper bound
$\curvupbnd$. We consider a convex ball
$B_\rho = \ballM{c}{\rho} \supset \cballM{c}{r}$ and we wish to find
conditions on $\mu$ and $\rho$ such that
$\genfctmu\colon \close{B}_{\rho} \to \R$ has gradient pointing
outward on $\bdry{B_\rho}$. To be specific, let $N$ be an outward
pointing unit normal vector field on $\bdry{B_\rho}$. Then
$\grad\genfctmu$ is \defn{pointing outward} on $\bdry{B_\rho}$ if
$\bracketprod{N(x)}{\grad\enfctmu{x}}>0$ for all
$x \in \bdry{B_\rho}$.

Using the Jordan decomposition in \eqref{eq:grad.enfct}, and defining
\begin{equation*}
X(x) = -\int \exp_{x}^{-1} (y) \, d\mu_+(y), \qquad
Z(x) = -\int \exp_{x}^{-1} (y) \, d\mu_-(y).
\end{equation*}
we have $\grad \genfctmu = X - Z$, with $\bracketprod{N}{X} > 0$ and 
$\bracketprod{N}{Z} > 0$.

It is convenient to introduce $R=\rho-r>0$.  For the ``inward pointing''
part of the gradient, we have $\bracketprod{N}{Z} \leq (2r + R) \mun$,
because any given point in $\close{B}_\man(c, r)$ lies at most $2r+ R$ from a
point on $\partial B_\rho$.

The ``outward pointing'' part of the gradient satisfies the bound
$\bracketprod{N}{X} \geq R \mup$.  This bound is not as easy to prove
as the bound on the inward pointing part of the gradient.  We wish to
show that if $x\in \bdry{B_\rho}$ and $y\in \bdry{\ballM{c}{r}}$, then
\begin{equation}
  \label{eq:half.space.bnd}
  \bracketprod{-\exp_x^{-1}(y)}{N(x)} = \norm{\exp_x^{-1}(y)}\cos\varphi
  \geq R,
\end{equation}
where $\varphi$ is the angle between $-N(x)$ and $\exp_x^{-1}(y)$. In
other words, we want to show that the image of $\cballM{c}{r}$ under
$\exp_x^{-1}$ is contained in the half-space
$\{v \in T_x\man \mid \bracketprod{v}{-N(x)}\geq R \}$.

Let $b = \norm{\exp_x^{-1}(y)}$ be the length of the geodesic segment
between $x$ and $y$. We show that, if $b \leq R/ \cos \varphi$, the
only geodesic triangle with edge lengths $r$, $r+R$, $b$, vertices
$c$, $x$, $y$, and angle $\varphi$ at $x$, as in
Figure~\ref{Figure:KMAngle}, is the trivial triangle with
$\varphi =0$.

Observe that $ \sin (a s) \leq a \sin s $ for $a \geq 1$ and
$0 \leq as \leq \pi/2$, and this inequality is strict if $s>0$ and
$a>1$. This can be seen by differentiating with respect to $s$ and
observing that $\cos a s \leq \cos s$ under the same conditions. 

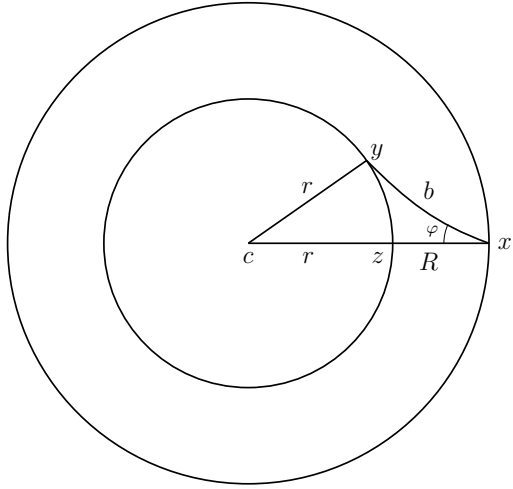
\begin{figure}
\centerline{
\scalebox{.8}{
\large
\begin{tikzpicture}
\coordinate [label={above :$b$}] (b) at (3,0.6);
\coordinate [label={below :$c$}] (p) at (0,0);
\coordinate [label={right :$x$}] (x) at (4,0);
\coordinate [label={below left :$z$}] (z) at (2.4,0);
\draw[thick,black] (0,0) circle (4cm);
\draw[thick,black] (0,0) circle (2.4cm);
\draw[thick] (35:2.4) to [out=-45,in=160] (x) ;
\draw (4,0) ++ (180:0.75) arc (180 :155 : 0.75) ;
\coordinate [label={left :{\scriptsize $\varphi$}}] (t) at (3.3,0.2);
\draw[thick] (0,0) -- (x) node[near start, below] {$r$} node[near end,
below] {$R$};
\draw[thick] (0,0) -- (35:2.4) node [midway, above] {$r$} ;
\node[above] at (30:2.47) {$y$};
\end{tikzpicture}
}}
\caption[Two geodesic balls]{Sketch for the argument that
  $b\cos\varphi \geq R=\rho-r$.  This is a schematic view in Riemann normal
  coordinates whose origin coincides with the centre of the concentric
  geodesic balls $\ballM{c}{r}$ and $\ballM{c}{\rho}$.}
\label{Figure:KMAngle}
\end{figure}

Returning to the geodesic triangle, the angle comparison theorem
(\Lemref{lem:angle.comparison}) implies that $\cos \varphi$ is bounded
from below by $\cos \varphi^{}_\curvupbnd$, where
$\varphi^{}_\curvupbnd$ denotes the angle corresponding to $\varphi$
for the comparison triangle in the space of curvature
$\curvupbnd$. Without loss of generality we can assume that
$\curvupbnd$ is positive (for our purposes here, nothing is gained by
requiring $\curvupbnd$ to be a \emph{least} upper bound on the
sectional curvatures).

We have $b\geq R$, by the triangle inequality. Using our comparison
triangle, and letting $\tr = r\sqrt{\curvupbnd}$,
$\tR = R\sqrt{\curvupbnd}$, and $\tb = b\sqrt{\curvupbnd}$,
the cosine rule for spaces of positive curvature now yields
\begin{align*}
\cos \tr &= \cos (\tr+\tR) \cos (\tb) + \sin (\tr+\tR ) 
\sin (\tb) \cos \varphi^{}_\curvupbnd
\\
&\leq \cos (\tr+\tR) \cos (\tb) + \sin (\tr+\tR ) 
\sin (\tR/ \cos \varphi^{}_\curvupbnd)
\cos  \varphi^{}_\curvupbnd
\\
&\leq \cos (\tr+\tR) \cos (\tb) + \sin (\tr+\tR ) \sin (\tR)
\\
&\leq \cos (\tr+\tR) \cos (\tR) + \sin (\tr+\tR ) \sin (\tR)
\\
&= \cos \tr ,
\end{align*}
and since the inequalities are strict unless
$ \varphi^{}_\curvupbnd=0$, we must have $\varphi=0$. This confirms
\Eqnref{eq:half.space.bnd}, and we obtain our desired result:

\begin{lem}
  Let $\mu$ be a signed measure on $\man$ with support in
  $\cballM{c}{r}$, which is contained in a concentric convex geodesic
  ball $B_\rho = \ballM{c}{\rho}$. The gradient of $\genfctmu$ is
  pointing outward on $\bdry{B_\rho}$ if
\begin{equation*}
(\rho-r)\mup - (\rho + r)\mun>0.
\end{equation*}
\end{lem}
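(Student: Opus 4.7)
The plan is to evaluate $\bracketprod{N(x)}{\grad\enfctmu{x}}$ for arbitrary $x \in \bdry{B_\rho}$ by invoking the Jordan decomposition of $\mu$ and bounding the two contributions separately. Starting from the gradient formula \eqref{eq:grad.enfct} and the fields $X$, $Z$ defined just above, we have $\grad\genfctmu = X - Z$, so it is enough to show $\bracketprod{N(x)}{X(x)} \geq (\rho - r)\mup$ and $\bracketprod{N(x)}{Z(x)} \leq (\rho + r)\mun$; subtracting and using the hypothesis then gives the outward-pointing property.

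The upper bound on $\bracketprod{N(x)}{Z(x)}$ is essentially free. For any $y \in \supp(\mu_-) \subseteq \cballM{c}{r}$ and $x \in \bdry{B_\rho}$, the triangle inequality yields $\norm{\exp_x^{-1}(y)} = \distM{x}{y} \leq \rho + r$, and since $N(x)$ is a unit vector, $\bracketprod{N(x)}{-\exp_x^{-1}(y)} \leq \rho + r$. Integration against $\mu_-$ supplies the stated bound.

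The main obstacle is the matching lower bound on $\bracketprod{N(x)}{X(x)}$, which reduces to the half-space estimate: for every $y \in \cballM{c}{r}$, writing $\varphi$ for the angle at $x$ between $-N(x)$ and $\exp_x^{-1}(y)$ and $b = \distM{x}{y}$, one needs $b\cos\varphi \geq \rho - r$. I would argue by contradiction; assume $b < (\rho-r)/\cos\varphi$, and consider the geodesic triangle with vertices $c, x, y$ of side lengths $\distM{c}{y} \leq r$, $\distM{c}{x} = \rho$, $\distM{x}{y} = b$, the extremal configuration being $\distM{c}{y} = r$ (see Figure~\ref{Figure:KMAngle}). Angle comparison (\Lemref{lem:angle.comparison}) supplies a comparison triangle in $\Hu$ (one may assume $\curvupbnd > 0$ without loss) with corresponding angle $\varphi_\curvupbnd \geq \varphi$ at $\tilde x$, so $\cos\varphi_\curvupbnd \leq \cos\varphi$ and the hypothesized inequality persists with $\varphi_\curvupbnd$ in place of $\varphi$. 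Substituting into the spherical cosine rule $\cos\tr = \cos(\tr+\tR)\cos\tb + \sin(\tr+\tR)\sin\tb\cos\varphi_\curvupbnd$ (with $\tr, \tR, \tb$ the scaled lengths), and chaining in order the monotonicity of $\sin$ on $[0,\pi/2]$ to replace $\sin\tb$ by $\sin(\tR/\cos\varphi_\curvupbnd)$, the elementary inequality $\sin(as) \leq a\sin s$ for $a \geq 1$ and $0 \leq as \leq \pi/2$ (strict when $s > 0$ and $a > 1$), the triangle-inequality consequence $\tb \geq \tR$ and hence $\cos\tb \leq \cos\tR$, and finally the angle-addition identity $\cos(\tr+\tR)\cos\tR + \sin(\tr+\tR)\sin\tR = \cos\tr$, collapses the inequality to $\cos\tr \leq \cos\tr$ with strictness unless $\varphi_\curvupbnd = 0$, which is the desired contradiction. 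The heart of the matter is this trigonometric estimate: the bare triangle inequality delivers only $b \geq \rho - r$, and recovering the missing factor of $\cos\varphi$ genuinely requires the curvature-comparison machinery.
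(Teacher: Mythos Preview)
Your proposal is correct and follows essentially the same route as the paper: Jordan decomposition of $\grad\genfctmu$ into $X-Z$, the trivial triangle-inequality bound for the $Z$-part, and the half-space estimate $b\cos\varphi \geq \rho-r$ for the $X$-part via angle comparison in $\Hu$ followed by the spherical cosine rule and the inequality $\sin(as)\leq a\sin s$. The chain of inequalities you sketch is exactly the one the paper carries out.
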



%
%

\section{Barycentric coordinate neighbourhoods}
\label{sec:barycoords}

As mentioned in \Secref{sec:riem.com}, the centre of mass construction
allows us to define barycentric coordinates on Riemannian
manifolds. The signed measure $\mu$ is replaced with a discrete signed
measure $\lambda$ that assigns ``weights'' to the points in a 
set $\splxs$ of $n+1$ points contained in a sufficiently small ball
$B_\rho \subseteq \man$. The associated Riemannian simplex $\riemsplxs$
is the set of points with nonnegative barycentric coordinates.
\Thmref{thm:UniquenessKMnegative} allows us to define a barycentric
coordinates that extend to an open neighbourhood containing
$\riemsplxs$. 

In this section we quantify the extent of these barycentric coordinate
neighbourhoods. To facilitate this we first make some mild simplifying
assumptions to the setting of \Thmref{thm:UniquenessKMnegative},
arriving at \Corref{cor:simple.signed.measure}, which provides
transparent upper and lower bounds on $\rho$ in terms of the total
negative mass, and allows for easy comparison with Karcher's result
\cite[Theorem~1.2]{Karcher}. Then in \Secref{sec:nbrhd.extent} we use
these simplified bounds to quantify the extent of well-defined
barycentric coordinate neighbourhoods. The size of these
neighbourhoods depends on a measure of the \emph{quality} of $\splxs$,
as discussed in \Secref{sec:nbrhd.extent}. 

Our main result, \Thmref{thm:bary.coord.nbhd.extent}, quantifies the
size of a ball in the parameter domain, $B\subset \R^n$, on which the
barycentric coordinate map $b\colon B \to B_\rho \subseteq \man$ is
well defined. By earlier work \cite{RiemSimp2015} we are ensured that
the barycentric coordinate map is not only well defined, but is in
fact an embedding on this domain. We also estimate the size of a
geodesic ball that is contained in the barycentric coordinate
neighbourhood $b(B)$ (\Propref{prop:contained.ball}). In
\Secref{sec:shared.facet} we exploit barycentric coordinate
neighbourhoods to show that Riemannian simplices behave like their Euclidean
counterparts: simplices with the same orientation that share a facet
intersect only in that facet.

\subsection{Simplifying the setting}
\label{sec:simplifying}

Our goal now is to simplify the constraints imposed in
\Thmref{thm:UniquenessKMnegative}, albeit at the expense of weakening
the statement. We will assume $\mu(\man)=1$; this normalisation was
not required for \Thmref{thm:UniquenessKMnegative}, although we did
assume $\mu_-(\man),\mu_+(\man) < \infty$. We will also replace
$\curvlowbnd$ and $\curvupbnd$ with a single bound $\curvabsbnd$ on
the absolute value of the sectional curvatures. Before exploiting
these assumptions, we first replace the bounds of
\Lemref{lem:enfct.convex} with constraints linear in $\rho$.

\begin{lem}
  \label{lem:lin.bnd.tan}
  We have
  \begin{align*}
    &&1 - \frac{2\vartheta}{\pi} &\leq \frac{\vartheta}{\tan\vartheta}
&&\text{for } \vartheta \in [0,\tfrac{\pi}{2}),&&\\
    &&1 + \vartheta &\geq \frac{\vartheta}{\tanh\vartheta}
&&\text{for } \vartheta \geq 0.&&\\
  \end{align*}
\end{lem}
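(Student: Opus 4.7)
The plan is to treat the two inequalities separately by elementary one-variable calculus.

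For the first inequality, I would set $h(\vartheta)=\vartheta/\tan\vartheta=\vartheta\cot\vartheta$ on $[0,\pi/2)$, extended by continuity to $h(0)=1$ and $h(\pi/2)=0$. The linear function $1-2\vartheta/\pi$ is exactly the chord between the endpoints $(0,1)$ and $(\pi/2,0)$, so it suffices to show that $h$ is concave on $[0,\pi/2)$. A direct differentiation gives $h''(\vartheta)=2\csc^2\vartheta\,(\vartheta\cot\vartheta-1)$, and since $\tan\vartheta\geq\vartheta$ on $[0,\pi/2)$, the factor $\vartheta\cot\vartheta-1\leq 0$, so $h''\leq 0$. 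Concavity then places $h$ above its chord, which is the claimed bound.

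For the second inequality, I would set $m(\vartheta)=\vartheta/\tanh\vartheta=\vartheta\cosh\vartheta/\sinh\vartheta$ for $\vartheta>0$, extended by $m(0)=1$. I want to show $m(\vartheta)\le 1+\vartheta$; since $m(0)=1$, it suffices to prove $m'(\vartheta)\le 1$ for all $\vartheta\ge 0$. A quotient-rule computation gives
\[
m'(\vartheta)=\frac{\sinh\vartheta\cosh\vartheta-\vartheta}{\sinh^2\vartheta}.
\]
The inequality $m'(\vartheta)\le 1$ then reduces, after using $\cosh^2-\sinh^2=1$ and multiplying through, to $\sinh\vartheta\,e^{-\vartheta}\le\vartheta$, i.e.\ $(1-e^{-2\vartheta})/2\le\vartheta$. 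Setting $\phi(\vartheta)=\vartheta-(1-e^{-2\vartheta})/2$ we have $\phi(0)=0$ and $\phi'(\vartheta)=1-e^{-2\vartheta}\ge 0$, so $\phi\ge 0$, proving the reduced inequality. Integrating $m'\le 1$ from $0$ to $\vartheta$ then yields $m(\vartheta)\le 1+\vartheta$.

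I do not expect a serious obstacle: both inequalities are routine once the correct reformulations are identified. The only mildly delicate point is noting that the right endpoint $\vartheta=\pi/2$ must be handled by the continuous extension of $h$, and that the chord-comparison argument requires $h$ to be concave on the closed interval, which is exactly what the sign of $h''$ provides. The hyperbolic case is entirely mechanical once the identity $\cosh^2-\sinh^2=1$ is used to simplify $m'-1$.
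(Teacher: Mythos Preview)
Your proof is correct. For the first inequality your argument is essentially identical to the paper's: both compute the second derivative of $h(\vartheta)=\vartheta\cot\vartheta$, use $\tan\vartheta\ge\vartheta$ to deduce concavity, and then compare with the chord through $(0,1)$ and $(\pi/2,0)$.

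For the second inequality you take a genuinely different route. The paper argues that $\vartheta/\tanh\vartheta$ is \emph{convex} (again via the second derivative) and then invokes Jensen's inequality, i.e., comparison with a secant line, to get the bound $1+\vartheta$. You instead bound the first derivative directly: you show $m'(\vartheta)\le 1$ by reducing to $(1-e^{-2\vartheta})/2\le\vartheta$, and then integrate from $0$. Your approach is arguably cleaner here, since on the unbounded interval $[0,\infty)$ the ``secant line'' $1+\vartheta$ only arises as a limit of secants through $(0,1)$ and $(T,T/\tanh T)$ as $T\to\infty$; the paper leaves this limiting step implicit. Your derivative bound avoids that subtlety entirely, at the cost of one more explicit computation.
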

\begin{proof}
  By taking the second derivatives, and observing that
$\frac{\vartheta}{\tan\vartheta} \geq 1$ and
$\frac{\vartheta}{\tanh\vartheta} \leq 1$ in the stated ranges, we
find that $\frac{\vartheta}{\tan\vartheta}$ is concave and
$\frac{\vartheta}{\tanh\vartheta}$ is convex in their ranges. The
inequalities now follow from Jensen's inequality, i.e., by comparing
the function to a secant line. 
\end{proof}

Revisiting \Lemref{lem:enfct.convex} with the assumption that
$\curvlowbnd = -\curvabsbnd$ and $\curvupbnd = \curvabsbnd>0$, the
convexity of $\genfctmu$ is ensured if
 the middle inequality is satisfied.
Using \Lemref{lem:lin.bnd.tan}, and then expanding $\tl=\tu = 2\rho\rl$ we
get that $\genfctmu$ is convex if
\begin{equation*}
  \left(1-\frac{4\rho\rl}{\pi} \right)  \mup -
  (1 + 2\rho\rl)\mun
  >0.
\end{equation*}
Now using the assumption that $\mu(\man)= \mup - \mun = 1$, we obtain
a convenient simplified version of \Thmref{thm:UniquenessKMnegative}.
\begin{cor}[Centre of mass of signed measures]
  \label{cor:simple.signed.measure}
  Suppose the sectional curvature $K$ of $\man$ is bounded by
  $\abs{K} \leq \curvabsbnd$, and $\mu$ is a signed measure on $\man$
  with $\mu(\man)=1$ and support contained in a geodesic ball
  $\cballM{c}{r}$. If $B_\rho = \ballM{c}{\rho}$ with
  $r< \rho < \rho_0$ as defined in \eqref{eq:rad.bnd}, then
  $\genfctmu\colon \close{B}_\rho \to \R$ (\Eqnref{eq:enfct}) has a
  unique minimum in $B_\rho$ if
  \begin{align}
    \label{eq:rR.constraint}
    \rho &> (1 + 2\mun)r,\\
\intertext{and}
    \label{eq:curv.constraint}
    \rho &< \frac{\pi}{4\rl}\left(1 + C\mun \right)^{-1},
  \end{align}
  where $C= \left(1+\frac{\pi}{2} \right)$.
\end{cor}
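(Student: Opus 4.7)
The plan is to derive both \eqref{eq:rR.constraint} and \eqref{eq:curv.constraint} by substituting the simplifying assumptions into the two hypotheses of \Thmref{thm:UniquenessKMnegative}. Since $\abs{K}\leq \curvabsbnd$, I take $\curvlowbnd = -\curvabsbnd$ and $\curvupbnd = \curvabsbnd$, which puts us in the middle case $\curvlowbnd \leq 0 \leq \curvupbnd$ of \Thmref{thm:UniquenessKMnegative} and also makes $\tl = \tu = 2\rho\rl$. Also, the normalisation $\mu(\man)=1$ gives $\mup = 1 + \mun$, which will be used throughout.

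For \eqref{eq:rR.constraint}, I substitute $\mup = 1 + \mun$ into the outward-gradient hypothesis $(\rho-r)\mup - (\rho+r)\mun > 0$ of \Thmref{thm:UniquenessKMnegative}. Routine cancellation collapses this to $\rho - r - 2r\mun > 0$, i.e., $\rho > (1+2\mun)r$. This step is pure algebra.

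For \eqref{eq:curv.constraint}, the convexity hypothesis reads $\frac{\tu}{\tan\tu}\mup - \frac{\tl}{\tanh\tl}\mun > 0$. I would weaken the two trigonometric factors using \Lemref{lem:lin.bnd.tan}: replace $\frac{\tu}{\tan\tu}$ by the smaller quantity $1 - \frac{4\rho\rl}{\pi}$ and $\frac{\tl}{\tanh\tl}$ by the larger quantity $1 + 2\rho\rl$, so that it suffices to verify
\begin{equation*}
  \Bigl(1 - \tfrac{4\rho\rl}{\pi}\Bigr)\mup - (1 + 2\rho\rl)\mun > 0.
\end{equation*}
Substituting $\mup = 1 + \mun$ and regrouping, the $\mun$ terms that are independent of $\rho$ cancel, leaving
\begin{equation*}
  1 > \tfrac{4\rho\rl}{\pi}(1+\mun) + 2\rho\rl\,\mun
     = \tfrac{4\rho\rl}{\pi}\Bigl(1 + \bigl(1+\tfrac{\pi}{2}\bigr)\mun\Bigr)
     = \tfrac{4\rho\rl}{\pi}(1 + C\mun),
\end{equation*}
which is exactly \eqref{eq:curv.constraint}.

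There is no real obstacle here; the work is bookkeeping, with the only mild judgement call being the choice to use the middle case of \Thmref{thm:UniquenessKMnegative} (forced by the two-sided bound on $K$) and the decision to apply \Lemref{lem:lin.bnd.tan} before substituting $\mup = 1+\mun$, so that the final expression is linear in $\rho$ and isolates $\rho$ cleanly on one side.
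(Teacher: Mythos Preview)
Your proposal is correct and follows essentially the same route as the paper: set $\curvlowbnd=-\curvabsbnd$, $\curvupbnd=\curvabsbnd$ to land in the middle case of \Thmref{thm:UniquenessKMnegative}, linearise the trigonometric factors via \Lemref{lem:lin.bnd.tan}, and then substitute $\mup=1+\mun$. In fact you spell out the algebra for both conditions more explicitly than the paper does.
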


If $\mu$ is an unsigned measure, the conditions of
\Corref{cor:simple.signed.measure} reduce to the same conditions
required in Karcher's theorem \cite[Thm.~1.2]{Karcher}. 

\subsection{The extent of barycentric coordinate neighbourhoods}
\label{sec:nbrhd.extent}

We consider a set $\splxs$ of $n+1$ points contained in a geodesic
ball $B_\rho \subseteq \man$ of radius $\rho < \rho_0$ 
($\rho$ will be further constrained below).  We will require that there exists
a Euclidean $n$-simplex $\tsplxs \subset \E^n$ whose vertices are in
correspondence with the points of $\splxs$ and whose edge lengths are
equal to the geodesic distances between the corresponding points in
$\splxs$.  This in itself is a constraint on $\splxs$ (see
\cite[\S5.1]{RiemSimp2015}).

Let $(\lambda_i)$ be the barycentric coordinate functions associated
with $\tsplxs$. We define $\lambda_+ \colon \E^n \to \R$ to be the
function that gives the sum of the positive barycentric coordinates,
and $\lambda_-$ to be the sum of the negative coordinates, so
$\lambda_+ + \lambda_- = 1$ is constant.  Given any fixed
$u \in \E^n$, the barycentric coordinates $(\lambda_i(u))$ define a
discrete measure on $\man$ (a weight for each point of $\splxs$) via
the correspondence between $\splxs$ and the vertices of $\tsplxs$.  In
this context, $\abs{\lambda_-(u)}$ plays the role of $\mu_-(\man)$ in
\Corref{cor:simple.signed.measure}.

We would like to find a domain $B \subset \E^n$, with $\tsplxs \subset
B$, such that the constraints of \Corref{cor:simple.signed.measure} are
satisfied for any $u\in B$. So we are interested in finding a bound on
$\abs{\lambda_-}$.

We choose an arbitrary vertex $\tv \in \tsplxs$, and we will find a
radius $\tr$ such that for all points $u$ in $B=\ballEn{\tv}{\tr}$,
this goal is attained. 

Let $\gsplxalt$ be a lower bound on the altitudes of $\tsplxs$.  The
magnitude of the gradient of each barycentric coordinate function
$\lambda_i$ is bounded by $a^{-1}$. Therefore $-\lambda_i < \tr/a$ on
$B$.  Since at most $n$ coordinate functions can be negative at any
point in $B$, we find
\begin{equation}
  \label{eq:tr.bnd.lambda.neg}
  \abs{\lambda_-} < \frac{n\tr}{\gsplxalt} = \frac{\tr}{\glongedge
    \gthickness}, 
\end{equation}
where $\glongedge$ is a strict upper bound on the edge lengths, and
$t = \gsplxalt/n\glongedge$ is the \defn{thickness} of $\tsplxs$.

We choose $r=\glongedge$ as the radius of
a ball containing $\splxs \subset \man$.  Observing that
$\abs{\lambda_-} = \lambda_+ - 1$ and exploiting
\eqref{eq:tr.bnd.lambda.neg} we find that
\eqref{eq:rR.constraint} is satisfied if
\begin{equation}
  \label{eq:rho.low}
  \rho > \glongedge + \frac{2\tr}{t}.
\end{equation}
Similarly, \eqref{eq:curv.constraint} may be replaced with
\begin{equation}
  \label{eq:rho.high}
  \rho < \frac{\pi}{4\sqrt{\curvabsbnd}}
  \left(1 + C\frac{\tr}{Lt} \right)^{-1}, 
\end{equation}
where $C = 1 + \pi/2$.

In order for there to exist a $\rho$ satisfying both of these
constraints, we require
\begin{equation*}
\Big(\glongedge + \frac{2\tr}{t}\Big)
\Big(1 + C\frac{\tr}{Lt}\Big)
<
\frac{\pi}{4\sqrt{\curvabsbnd}},
\end{equation*}
or
\begin{equation*}
  \frac{2C}{Lt^2}\tr^2 + \frac{2+C}{t}\tr + \Big(L -
  \frac{\pi}{4\sqrt{\curvabsbnd}} \Big) < 0. 
\end{equation*}
We require $\tr$ to be strictly smaller than the largest root:
\begin{equation}
  \label{eq:raw.tr.bnd}
  \tr < \frac{Lt(2+C)}{4C}
  \bigg({-}1 + \bigg(1 +
  \frac{8C}{(2+C)L}\Big(\frac{\pi}{4\sqrt{\curvabsbnd}} - L \Big) 
  \bigg)^{1/2} \bigg).
\end{equation}
Since $\frac{4}{9} < \frac{2+C}{4C} < \frac{1}{2}$,
\Eqnref{eq:raw.tr.bnd} is satisfied when
\begin{equation}
  \label{eq:clean.tr.bnd}
  \tr \leq \frac{Lt}{3}\bigg({-}1 + \sqrt{-3
    +\frac{\pi}{4L\sqrt{\curvabsbnd}}} \bigg). 
\end{equation}
Observe that this bound is positive provided
$L \leq \frac{1}{6\sqrt{\curvabsbnd}} <
\frac{\pi}{16\sqrt{\curvabsbnd}}$.
Also, the bound \eqref{eq:clean.tr.bnd} is maximised when
$L= \frac{\pi}{48\sqrt{\curvabsbnd}} \sim
\frac{1}{15\sqrt{\curvabsbnd}}$.

We want to ensure that the entire simplex $\tsplxs$ is contained in
the domain of the barycentric coordinates. Therefore, using a scale
parameter $s \geq 1$, we set $\tr = sL$ in \eqref{eq:clean.tr.bnd} and
find that $L$ must satisfy
\begin{equation*}
  L \leq \frac{\pi}{4\sqrt{\curvabsbnd}}\Big( \frac{9s^2}{t^2} +
  \frac{6s}{t} + 4 \Big)^{-1}. 
\end{equation*}
Since $t<1$, and $s\geq 1$ this is satisfied when
\begin{equation}
  \label{eq:splx.size.bnd}
  L \leq \frac{t^2}{25s^2\sqrt{\curvabsbnd}}.
\end{equation}
This is the criterion bounding $\tr$ that we sought. 

Thus when \eqref{eq:splx.size.bnd} is satisfied, the barycentric
coordinate map $b\colon \ballEn{\tv}{sL} \to B_{\rho}$ is well
defined, provided $\rho$ satisfies \eqref{eq:rho.low} and
\eqref{eq:rho.high}, with $\tr=sL$; the existence of such a $\rho$ is
assured.  But we can say more. It has been shown
\cite[Prop.~29]{RiemSimp2015} that the differential of the barycentric
coordinate map is nondegenerate provided
$L \leq t/(3\sqrt{\curvabsbnd})$, which is obviously satisfied when
\eqref{eq:splx.size.bnd} is. Although the argument was made in the
context of positive weights, this fact was not used. What is required
is that the energy functional \eqref{eq:disc.en.fct} that defines the
barycentric coordinate map be convex, but we still have that in our
current context. Thus we obtain our main result:

\begin{thm}[Barycentric coordinate neighbourhoods]
  \label{thm:bary.coord.nbhd.extent}
  Let $\man$ be an $n$-dimensional Riemannian manifold with sectional
  curvature $K$ bounded by $\abs{K} \leq \curvabsbnd$, and let $\sigma
  \subset \man$ be a set of $n+1$ points such that $\distM{p}{q}<L$
  for any $p,q \in \sigma$.

  Given a scale parameter $s\geq 1$, if $\sigma$ defines a Euclidean
  simplex $\tsplxs \subset \E^n$ with the same edge lengths, and with
  thickness $t$ satisfying
  \begin{equation*}
    t^2 \geq 25s^2L\rl,
  \end{equation*}
  then, 
  the barycentric coordinate map
  $b\colon \ballEn{\tv}{sL} \to B_{\rho}$, where $\tv$ is any vertex of
  $\tsplxs$, is well defined, and is in fact an embedding. The
  ball $B_\rho$ is centred on the vertex of $\splxs$ corresponding to
  $\tv$, and its radius $\rho$ must lie in the nonempty interval
  given by
  \begin{equation*}
    \left(1 + \frac{2s}{t} \right)L < \rho
    < \frac{\pi}{4\rl}\left(1 + C\frac{s}{t} \right)^{-1}\!,
  \end{equation*}
  where $C= 1 + \pi/2$.
\end{thm}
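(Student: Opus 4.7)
The plan is to read off the theorem directly from Corollary~\ref{cor:simple.signed.measure} applied pointwise to the discrete signed measure produced by the barycentric coordinates, and then upgrade ``well-defined'' to ``embedding'' by invoking an earlier result from \cite{RiemSimp2015} together with the convexity already established in Lemma~\ref{lem:enfct.convex}. So the argument splits naturally into two halves: well-definedness, which is a calculation ensuring the interval for $\rho$ is nonempty; and the embedding property, which is almost immediate.

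For the first half, I would fix $\tr = sL$ and let $u$ range over $\ballEn{\tv}{\tr}$. The barycentric coordinate functions $\gbarycoord{i}$ are affine on $\E^n$ with gradient of norm at most $1/\gsplxalt$, where $\gsplxalt$ is the minimum altitude of $\tsplxs$; since $\gbarycoord{i}(\tv) \in \{0,1\}$ and at most $n$ of the $\gbarycoord{i}(u)$ can be negative simultaneously, this yields $\abs{\lambda_-(u)} < n\tr/\gsplxalt = \tr/(\glongedge \gthickness)$. Taking $r = \glongedge$ as the radius of a ball on $\man$ containing $\splxs$ and plugging these estimates into \eqref{eq:rR.constraint} and \eqref{eq:curv.constraint}, I obtain the two displayed one-sided inequalities on $\rho$ in the theorem statement.

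The main obstacle is verifying that these two bounds on $\rho$ are compatible under the hypothesis $t^2 \geq 25s^2 L\rl$. Requiring the lower bound to be less than the upper bound gives a quadratic inequality in $\tr$; the largest root is \eqref{eq:raw.tr.bnd}. Bounding $\frac{2+C}{4C}$ crudely above by $\tfrac12$ produces the cleaner expression \eqref{eq:clean.tr.bnd}, and then substituting $\tr = sL$, using $s\geq 1$ and $\gthickness < 1$, reduces the whole condition to the inequality $L \leq t^2/(25s^2\rl)$ that appears in the hypothesis. This bookkeeping is routine but is where all the constants get set.

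For the second half, I would invoke \cite[Prop.~29]{RiemSimp2015}, which shows that the differential of the barycentric coordinate map is nondegenerate as soon as $L \leq \gthickness/(3\rl)$; this is implied by our hypothesis $t^2 \geq 25s^2 L\rl$ with $s\geq 1$ since $t<1$. The subtle point is that Proposition~29 was originally stated for nonnegative weights, but, as noted in the discussion preceding the theorem, its proof only uses the convexity of the energy functional \eqref{eq:disc.en.fct}, which in our setting is supplied by Lemma~\ref{lem:enfct.convex} (via Corollary~\ref{cor:simple.signed.measure}). Combined with injectivity, which follows from the uniqueness of the centre of mass established in Theorem~\ref{thm:UniquenessKMnegative}, this upgrades the well-defined map $\bmap\colon \ballEn{\tv}{sL} \to B_\rho$ to an embedding, completing the proof.
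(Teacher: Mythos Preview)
Your proposal is correct and follows the paper's own argument essentially verbatim: the bound \eqref{eq:tr.bnd.lambda.neg} on $\abs{\lambda_-}$, the substitution into \eqref{eq:rR.constraint}--\eqref{eq:curv.constraint} to obtain \eqref{eq:rho.low}--\eqref{eq:rho.high}, the quadratic analysis leading through \eqref{eq:raw.tr.bnd} and \eqref{eq:clean.tr.bnd} to \eqref{eq:splx.size.bnd}, and the appeal to \cite[Prop.~29]{RiemSimp2015} with the observation that only convexity (not positivity of the weights) is used there, all match the paper exactly. Your added remark on injectivity is a slight elaboration beyond what the paper states explicitly, but otherwise there is no difference in approach.
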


Although the image of $\ballEn{\tv}{sL}$ under the barycentric
coordinate map $b$ is guaranteed to be contained in $B_\rho$ for any
sufficiently large $\rho$, it may be also useful to know the size of a
geodesic ball that is contained in $b(\ballEn{\tv}{sL})$. The metric distortion
of the barycentric coordinate map has already been calculated
\cite[\S5.2]{RiemSimp2015}, and although this argument was made in the
context of non-negative weights, this fact was not exploited, and the
argument carries verbatim to the case of interest here. The argument
does require that $\rho$ be bounded by $\rho \leq \frac{t}{6\rl}$ but
this is satisfied if the conditions of
\Thmref{thm:bary.coord.nbhd.extent} are satisfied.

\begin{lem}[{{\cite[\S5.2]{RiemSimp2015}}}]
  \label{lem:bary.distort}
  Given the assumptions of \Thmref{thm:bary.coord.nbhd.extent}, the
  barycentric coordinate map $b\colon \ballEn{\tv}{sL} \to B_\rho$
  satisfies
  \begin{equation*}
    \abs{\distM{b(u)}{b(w)} - \norm{u-w}}
    \leq \frac{50\curvabsbnd\rho^2}{t^2} \norm{u-w},
  \end{equation*}
  for any $u,w \in \ballEn{\tv}{sL}$.
\end{lem}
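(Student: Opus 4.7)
The plan is to verify that the argument of \cite[\S5.2]{RiemSimp2015} applies verbatim, and to outline that argument in our current setting. The starting point is the implicit characterisation of the barycentric coordinate map: for $u \in \ballEn{\tv}{sL}$, the point $x = b(u)$ is the unique critical point in $B_\rho$ of the energy function $x \mapsto \tfrac12 \sum_i \lambda_i(u)\, \distM{x}{p_i}^2$, where $(\lambda_i(u))$ are the affine barycentric coordinates relative to $\tsplxs$. By \Eqnref{eq:grad.enfct}, this condition reads $\sum_i \lambda_i(u)\, \exp_{b(u)}^{-1}(p_i) = 0$, which is a smooth equation in $(u,x)$ whose $x$-derivative is, up to sign, the Hessian $\Hess \enfctbc{x}$. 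The convexity work of \Secref{sec:Emu.convex} already furnishes a uniform lower bound on this Hessian on $B_\rho$, so implicit differentiation gives $db$ as a bounded operator.

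Next I would make the comparison with the Euclidean picture. In Euclidean space with the same edge lengths, the analogous barycentric coordinate map is an isometry, so its differential equals the identity after we identify $T_{b(u)}\man$ with $\R^n$ via the framing used at $\tv$ (parallel-transported along the geodesic from $b(\tv)$ to $b(u)$, for instance). The deviation of $db$ from this Euclidean model is controlled by the curvature-induced discrepancies between the Riemannian cosine rule and the Euclidean one, exactly as quantified by \Lemref{lem:square.dist.estimates} together with the hinge and angle comparisons. A direct expansion shows that each such discrepancy contributes a term of order $\curvabsbnd \rho^2$, while the Hessian inversion introduces a factor that is bounded below by a constant multiple of $t^2$ (this is where the simplex quality enters, precisely as in \cite[Prop.~29]{RiemSimp2015}). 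Combining these two bounds yields
\begin{equation*}
  \onorm{db(u) - \Id} \leq \frac{50 \curvabsbnd \rho^2}{t^2}
\end{equation*}
uniformly on $\ballEn{\tv}{sL}$, with the constant $50$ absorbing universal numerical factors.

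Finally, integrating this differential bound along the Euclidean segment from $u$ to $w$ inside $\ballEn{\tv}{sL}$ gives an upper bound on $\distM{b(u)}{b(w)}$, while a matching lower bound follows from the same inequality applied to $db^{-1}$ (whose norm is controlled by the same quantity, since the relative error is small). Subtracting $\norm{u-w}$ yields the claimed inequality.

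The main obstacle, as in the original argument, is the uniform Hessian estimate: one must show that $\Hess \enfctbc$ stays comparable to the Euclidean Gram matrix of $\tsplxs$ throughout $B_\rho$, and then invert carefully enough to see the factor $t^2$ in the denominator rather than a worse function of the simplex quality. The hypothesis $\rho \leq t/(6\rl)$, which is automatic under \Thmref{thm:bary.coord.nbhd.extent}, is exactly what makes this inversion quantitative with the stated constant.
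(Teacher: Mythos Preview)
Your proposal is correct and aligns with the paper's own treatment: the paper does not give a proof here but simply cites \cite[\S5.2]{RiemSimp2015} and observes, in the preceding paragraph, that the argument there did not use the sign of the weights---only the strict convexity of $\genfctbc$---and hence carries over verbatim, subject to the auxiliary bound $\rho \leq t/(6\rl)$, which you also correctly note. Your outline of the underlying argument (implicit differentiation of the critical-point equation, comparison of the Hessian with the Euclidean Gram matrix of $\tsplxs$ to extract the factor $t^{-2}$, and integration of the differential bound) is an accurate sketch of the mechanism in the cited work, though the paper itself does not reproduce any of this detail.
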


There is no reason to choose a $\rho$ larger than necessary; with the
assumptions of \Thmref{thm:bary.coord.nbhd.extent}, we can choose
$\rho^2 = \left(1 + \frac{2s}{t} \right)^2L^2 + \varepsilon^2 \leq
\frac{3^2t^2}{25^2s^2\curvabsbnd}$,
where $\varepsilon$ is an arbitrarily small distance. Employing this
bound in \Lemref{lem:bary.distort}, and choosing a point
$w \in \bdry{\ballEn{\tv}{sL}}$, we find
\begin{equation*}
  \distM{b(\tv)}{b(w)} \geq \left(1 - \frac{18}{25s^2} \right)sL. 
\end{equation*}

We desire that the image of $\tsplxs$ be contained in this
ball. Demanding $\left(s - \frac{18}{25s} \right)L \geq L$, we find
this will be satisfied provided $s \geq \frac{3}{2}$, and we have the
following addendum to \Thmref{thm:bary.coord.nbhd.extent}:

\begin{prop}
  \label{prop:contained.ball}
  Assume that the conditions of \Thmref{thm:bary.coord.nbhd.extent} are
  satisfied, and that $v \in \splxs$ is the vertex corresponding to
  $\tv \in \tsplxs$. Then the image of $\ballEn{\tv}{sL}$ under the
  barycentric coordinate map contains the geodesic ball
  $\ballM{v}{\bar{r}}$, where
  \begin{equation*}
    \bar{r} = \left(s - \frac{18}{25s} \right)L.
  \end{equation*}
  If $s \geq \frac{3}{2}$, then $\bar{r} \geq L$, and so
  $\ballM{v}{\bar{r}}$ contains $\tsplxs$.
\end{prop}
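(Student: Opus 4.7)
The plan is to exploit the metric distortion estimate of Lemma~\ref{lem:bary.distort} together with the embedding property guaranteed by Theorem~\ref{thm:bary.coord.nbhd.extent}. Concretely, I would first push $\rho$ to the lower end of its admissible range so as to minimise the distortion; then use the distortion estimate to show that the images of points approaching the boundary sphere $\bdry\ballEn{\tv}{sL}$ have manifold distance at least $\bar r$ from $v$; and finally conclude by a short invariance-of-domain argument that $\ballM{v}{\bar r}$ is entirely contained in $b(\ballEn{\tv}{sL})$.

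For the first step, set $\rho^2 = (1+2s/t)^2 L^2 + \varepsilon^2$ for arbitrarily small $\varepsilon > 0$; this is admissible by Theorem~\ref{thm:bary.coord.nbhd.extent}. Using the hypothesis $t^2 \geq 25 s^2 L\rl$ (equivalently $L^2\curvabsbnd \leq t^4/(625 s^4)$), together with $t \leq 1/n \leq 1$ and $s \geq 1$ so that $(t + 2s)^2 \leq 9 s^2$, a short computation yields, after sending $\varepsilon \to 0$, the clean bound $50\curvabsbnd \rho^2/t^2 \leq 18/(25 s^2)$. Applying Lemma~\ref{lem:bary.distort} with $u = \tv$ to any $w \in \ballEn{\tv}{sL}$ then gives $\distM{v}{b(w)} \geq \norm{\tv-w}\bigl(1 - 18/(25 s^2)\bigr)$. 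Letting $\norm{\tv - w} \to sL$ shows that every accumulation point of $b$ on $\bdry\ballEn{\tv}{sL}$ lies at manifold distance at least $\bar r = (1 - 18/(25 s^2)) sL$ from $v$.

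For the topological step, Theorem~\ref{thm:bary.coord.nbhd.extent} tells us that $b$ is an embedding, so $U = b(\ballEn{\tv}{sL})$ is an open subset of $\man$ containing $v$. If some $y \in \ballM{v}{\bar r}$ were not in $U$, any continuous path in $\ballM{v}{\bar r}$ from $v$ to $y$ would have to cross $\bdry U$; but every point of $\bdry U$ arises as $\lim b(w_n)$ for a sequence $w_n$ approaching $\bdry\ballEn{\tv}{sL}$, and so lies at distance at least $\bar r$ from $v$, contradicting the path lying inside $\ballM{v}{\bar r}$. Hence $\ballM{v}{\bar r} \subseteq U$, which is the first claim.

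The second assertion is an elementary check: for $s \geq \tfrac{3}{2}$ one has $s - 18/(25 s) \geq \tfrac{3}{2} - \tfrac{12}{25} > 1$, so $\bar r \geq L$; since Theorem~\ref{thm:bary.coord.nbhd.extent} assumes $\distM{p}{q} < L$ for all $p,q \in \splxs$, every vertex of $\splxs$ lies in $\ballM{v}{L} \subseteq \ballM{v}{\bar r}$. The main obstacle is really the first step: one has to carry $\rho$ at its lower limit through the distortion bound and track the constants carefully enough to obtain a quantity that shrinks like $1/s^2$, which is what allows the argument to close.
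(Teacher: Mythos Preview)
Your proposal is correct and follows essentially the same route as the paper: choose $\rho$ at the lower end of its admissible range, bound $(1+2s/t)^2L^2$ using $t^2 \geq 25s^2L\rl$ and $(t+2s)^2 \leq 9s^2$ to get $50\curvabsbnd\rho^2/t^2 \leq 18/(25s^2)$, feed this into \Lemref{lem:bary.distort}, and then check $s - 18/(25s) \geq 1$ for $s \geq 3/2$. If anything you are slightly more careful than the paper, which simply takes $w \in \bdry\ballEn{\tv}{sL}$ without the limiting argument and leaves the invariance-of-domain step implicit.
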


\subsection{Riemannian simplices that share a facet}
\label{sec:shared.facet}

Suppose $\riemsplxs \subset \man$ is a Riemannian $n$-simplex defined
by $\splxs = \{p,p_0,\ldots, p_{n-1}\}$. The \defn{facet} of
$\riemsplxs$ opposite $p$ is the Riemannian $(n-1)$-simplex
$\riemsplxeta$ defined by $\eta=\{p_0,\ldots,p_{n-1}\}$, i.e., the
set of points in $\riemsplxs$ where the barycentric coordinate
associated with $p$ is $0$. Suppose now that $\riemsplxt \subset \man$
is another Riemannian simplex defined by $\splxt
=\{p_0,q,p_1,\ldots,p_{n-1}\}$, so that $\riemsplxt$ shares the facet
$\riemsplxeta$ with $\riemsplxs$. 

Suppose that both $\riemsplxs$ and $\riemsplxt$ satisfy the conditions
of \Thmref{thm:bary.coord.nbhd.extent}, with scale factor $s=3/2$, and
with $L$ being a common strict upper bound on their edge lengths. Then
they are nondegenerate, and we can define their orientation in
$B_\rho$ (see {{\cite[\S3.3]{RiemSimp2015}}}). If they have the same
orientation, then they can be represented by Euclidean simplices
$\tsplxs =\{\tp,\tp_0,\ldots, \tp_{n-1}\}$ and
$\tsplxt = \{\tp_0,\tq, \tp_1, \ldots,\tp_{n-1}\}$ that share a common
facet, $\tilde{\eta} = \tsplxs \cap \tsplxt$, such that $\tp$ and
$\tilde{q}$ lie on opposite sides of the hyperplane
$\affhull{\tilde{\eta}}$. Thus $\tsplxtE = \convhull{\tsplxt}$ lies in
the region where the barycentric coordinate associated with
$p\in \tsplxs$ is nonpositive, and similarly,
$\tsplxsE = \convhull{\tsplxs}$ lies in the region where the
barycentric coordinate associated with $\tq\in \tsplxt$ is
nonpositive, and $\affhull{\tilde{\eta}}$ is the common 0 level set of
both of these barycentric coordniate functions.

The barycentric coordinate map for $\splxs$ is an embedding of
$\ballEn{\tp_0}{sL}$ that maps $\tsplxsE$ to $\riemsplxs$, preserving
the associated barycentric coordinates, and similarly the barycentric
coordinate map for $\splxt$ maps $\tsplxtE$ to $\riemsplxt$. These
maps agree on $\affhull{\tilde{\eta}}$, and by our choice of scale
factor $s$, \Propref{prop:contained.ball} ensures that
$\ballM{p_0}{L}$ is contained in the image of both maps.  Since the
$(n-1)$-submanifold that is the image of $\affhull{\tilde{\eta}}$
separates this ball into two components according to the sign of the
barycentric coordinate of $p$ (and likewise of $q$), it follows that
$\riemsplxs \cap \riemsplxt = \riemsplxeta$.



\paragraph{Acknowledgements}
\Thmref{thm:UniquenessKMnegative} appeared in Wintraecken's PhD thesis
\cite[Theorem~3.4.9]{wintraecken2015}. The current exposition
benefited from the comments and suggestions of the jury. In particular
we thank John Sullivan for valuable feedback on this work.
This research has been partially supported by the 7th Framework
Programme for Research of the European Commission, under FET-Open
grant number 255827 (CGL Computational Geometry Learning).
Partial support has been provided by the Advanced Grant of the
European Research Council GUDHI (Geometric Understanding in Higher
Dimensions).


\phantomsection
\bibliographystyle{alpha}
\addcontentsline{toc}{section}{Bibliography}
\bibliography{geomrefs}

\end{document}